\newtheorem{theorem}{Theorem}[section]
\newtheorem{lemma}[theorem]{Lemma}
\newtheorem{proposition}[theorem]{Proposition}
\newtheorem{corollary}[theorem]{Corollary}
\newtheorem{problem}[theorem]{Problem}
\newtheorem{conjecture}[theorem]{Conjecture}
\theoremstyle{definition}
\newtheorem{remark}[theorem]{Remark}
\begin{document}

\title[Central polynomials of minimal degree for matrices]
{Central polynomials of minimal degree for matrices}

\author[Vesselin Drensky, Boyan Kostadinov]
{Vesselin Drensky, Boyan Kostadinov}
\address{Institute of Mathematics and Informatics
\newline Bulgarian Academy of Sciences, 1113 Sofia, Bulgaria}
\email{drensky@math.bas.bg, boyan.sv.kostadinov@gmail.com}

\thanks{The research of the second named author was supported
by the National Programme of Ministry of Education and Science
``Young Scientists and Post-Doctoral Researchers - 2''.}

\subjclass[2020]
{16R30; 16R20; 16Z05; 20C30; 20G05.}
\keywords{Central polynomial for matrices; algebras with polynomial identity; representations of symmetric and general linear groups.}

\maketitle

\begin{abstract}
Formanek made the conjecture that the minimal degree of the central polynomials for the $n\times n$ matrix algebra over a field of characteristic 0 is $(n^2+3n-2)/2$
and this is true for $n\leq 3$. For $n=4$ there are examples of central polynomials of degree $13=(4^2+3\cdot 4-2)/2$ and we do not know whether there are central polynomials of lower degree.
In this paper we discuss methods for searching for central polynomials of low degree and prove
that the algebra of $4\times 4$ matrices does not have central polynomials in two variables of degree $\leq 12$.
As a byproduct of our computations we obtain that this algebra does not have also polynomial identities in two variables of degree $\leq 12$.
\end{abstract}

\section{Introduction}

Let $K\langle X\rangle=K\langle x_1,x_2,\ldots\rangle$ be the free associative algebra over a field $K$, i.e. the algebra of polynomials in noncommuting variables.
The polynomial $c(x_1,\ldots,x_d)\in K\langle X\rangle$ is a central polynomial for a $K$-algebra $R$
if $c(r_1,\ldots,r_d)$ is in the center of $R$ for all $r_1,\ldots,r_d\in R$ but it is not a polynomial identity for $R$, i.e. it does not vanish evaluated on $R$.
For example, by the Cayley-Hamilton theorem for the $2\times 2$ matrix algebra $M_2(K)$
\[
a^2-\text{tr}(a)a+\det(a)I_2=0,\,a\in M_2(K),
\]
where $I_2$ is the identity matrix. Since $\text{tr}(ab)=\text{tr}(ba)$, $a,b\in M_2(K)$, we obtain that
$\text{tr}([a,b])=0$, where $[a,b]=ab-ba$ is the commutator of $a,b$. Hence
\[
[a,b]^2=-\det([a,b])I_2
\]
and the polynomial
\[
c(x_1,x_2)=[x_1,x_2]^2
\]
is a central polynomial for the $2\times 2$ matrix algebra $M_2(K)$.

In 1956 Kaplansky \cite{Kap1} gave a talk where he asked 12 problems which
motivated significant research activity in the next decades. Problem 5 of
his problems was about the existence of central polynomials for matrices.

\begin{problem}\label{Problem of Kaplansky}
Does there exist a polynomial $c(x_1,\ldots,x_d)\in K\langle X\rangle$ which always takes values in the center of $M_n(K)$
without being identically $0$?
\end{problem}

According to Kaplansky, Herstein informed him that over a finite field and for any $n$ John Thompson constructed a polynomial in one variable which sends
any $n\times n$ matrix to a scalar, and not always to the same scalar.
A nonconstructive proof that $n\times n$ matrices over a finite field have central polynomials was given by Latyshev and Shmelkin \cite{LaSh}.

Later Kaplansky \cite{Kap2} revised his problems and the new version of the problem became:

\begin{problem}\label{Revised problem of Kaplansky}\cite[Problem 16]{Kap2}
Does there exist a homogeneous multilinear polynomial of positive degree which always takes values in the center of $M_n(K)$, $n\geq 3$, without being identically $0$?
\end{problem}

In this form the problem also eliminates the trivial answer when
\[
c(x_1,\ldots,x_{2n})=1+s_{2n}(x_1,\ldots,x_{2n})
\]
because by the Amitsur-Levitzky theorem \cite{AmLev} the standard identity $s_{2n}=0$ is a polynomial identity for $M_n(K)$.

The answer to the problem of Kaplansky was given in 1972-1973
by Formanek \cite{Fo1} and Razmyslov \cite{Raz1}. Later, in 1979, an alternative one-page proof
of the existence of central polynomials
based on results of Amitsur was given by Kharchenko \cite{Kh}.

The existence of central polynomials for matrices was very important for ring theory.
Very soon important theorems were established or simplified using central polynomials, see the monograph of Jacobson \cite{Ja}.

The central polynomial of Formanek for $M_n(K)$ is of degree $n^2$
and the polynomial of Razmyslov is of degree $3n^2-1$.
Using the method of Razmyslov, Halpin \cite{Ha} constructed a central polynomial which is also of degree $n^2$.
There was a common belief that the minimal degree of the central polynomials of $M_n(K)$ is $n^2$
and this is true for $n\leq 2$.

If not explicitly stated, in what follows we assume that the base field $K$ is of characteristic 0.
In 1984 Drensky and Kasparian constructed a central polynomial of degree 8 for $M_3(K)$ \cite{DrKas2}
and showed that $M_3(K)$ does not have central polynomials of lower degree \cite{DrKas1}.
These and other new results were included in the master thesis of Kasparian \cite{Kas} (with supervisor Drensky).
The new central polynomial of degree 8 was obtained using ideas of the Rosset proof \cite{Ros} of the Amitsur-Levitzki theorem.
(The proof of Rosset uses matrices with entries from the Grassmann algebra.)
The approach to show that there are no central polynomials of degree 7
was based on computations by hand combined with techniques of representation theory of the general linear group.
With the same method more central polynomials of degree 8 were found.
Using a computer, Bondari \cite{Bo} found all central polynomials of degree 8 for $M_3(K)$.
Three of them were known from \cite{DrKas1} and one of them was new.
The following conjecture is due to Formanek \cite{Fo2}.

\begin{conjecture}\label{Conjecture of Formanek}
The minimal degree
of the central polynomials for $M_n(K)$ over a field
$K$ of characteristic 0 is equal to
\[
\text{\rm mindeg}(M_n(K))=\frac{1}{2}(n^2+3n-2).
\]
\end{conjecture}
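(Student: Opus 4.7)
The plan is to work with multilinear polynomials throughout, since in characteristic zero any central polynomial for $M_n(K)$ can be multilinearized without increasing its degree; so it suffices to identify the smallest $d$ for which a multilinear central polynomial exists. I would attack the conjecture in two directions: an upper bound $\mathrm{mindeg}(M_n(K)) \le (n^2+3n-2)/2$ via an explicit construction, and a matching lower bound via the representation-theoretic machinery attached to the cocharacter sequence of $M_n(K)$.

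For the upper bound, I would attempt to generalize the degree-$8$ polynomial of Drensky--Kasparian for $M_3(K)$, which was modeled on Rosset's Grassmann-matrix proof of Amitsur--Levitzki. The strategy is to take a suitable product of generic matrices paired against Grassmann generators, extract its scalar part, and re-express this as a multilinear polynomial of the predicted degree; verifying that it is central and not identically zero is then a finite computation on generic $n \times n$ matrices. For the lower bound, the plan is to decompose the multilinear component $P_d$ of the relatively free algebra $K\langle X\rangle/T(M_n(K))$ into $S_d$-irreducibles with multiplicities $m_\lambda$ (the $d$-th cocharacter of $M_n(K)$), and to compare it with the pure trace algebra in $d$ generic matrices, which by Procesi--Razmyslov is where multilinear polynomials taking central values live modulo identities. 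The target statement is that for $d < (n^2+3n-2)/2$, every $S_d$-isotypic component of $P_d$ capable of producing a central value is already contained in the T-ideal of $M_n(K)$.

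The main obstacle is that the cocharacter sequence of $M_n(K)$ is unknown in closed form for $n\geq 3$, so a fully structural proof for general $n$ seems out of reach with current tools; even for $n=3$ the Drensky--Kasparian lower bound argument was delicate. A more tractable path, which I would pursue for fixed small $n$, is to restrict to polynomials in a small number $k$ of variables. Under the $GL_k$-action, candidates are parametrized by highest-weight vectors attached to partitions $\lambda$ with at most $k$ parts, and for each $\lambda \vdash d$ with $d$ below the conjectured bound one tests, by evaluation on generic matrices, whether any Young-symmetrized polynomial produces a nonzero scalar value. The computational bottleneck is the rapid growth of the relevant spaces with $d$ and $k$, which explains why the paper restricts attention to $k=2$ variables and to degrees $\le 12$ for $n=4$; extracting even this partial evidence for the conjecture will be the hard part and requires efficient use of $GL_k$-highest-weight representatives together with evaluations on small parametric families of $4 \times 4$ matrices.
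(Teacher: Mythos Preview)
The statement is a \emph{conjecture}, and the paper does not prove it; there is no proof in the paper to compare your attempt against. What the paper actually establishes is partial evidence toward the $n=4$ case: that $M_4(K)$ has no central polynomials \emph{in two variables} of degree $\le 12$, obtained precisely by the $GL_2$-highest-weight-vector method you describe in your final paragraph (combined with the reduction to proper polynomials lying in $T(UT_n(K))$ and Bergman's theorem to cut down the relevant partitions). Your proposal is therefore not a proof but a research outline, and on that level it is realistic about the obstacles.

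Two points deserve flagging. First, the upper bound is also open in general: the best known construction for arbitrary $n$ (Drensky, \emph{Israel J.\ Math.} 1995) produces a central polynomial of degree $(n-1)^2+4$, which equals $(n^2+3n-2)/2$ only for $n=3,4$; for $n\ge 5$ no central polynomial of the conjectured minimal degree is known, so your plan to ``generalize the degree-$8$ polynomial of Drensky--Kasparian'' to hit the target for all $n$ would already be a new result, not a routine step. Second, even for $n=4$ the lower bound is not settled by the paper: only two-variable polynomials are handled, and ruling out degree-$12$ central polynomials in three or more variables remains open. Your own caveat that a fully structural proof for general $n$ seems out of reach with current tools is the accurate summary of the situation; what you have written is a sound description of the landscape, but it should not be read as a proof strategy that could presently be carried to completion.
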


Although this is not a reason, the only quadratic function $p(n)$ with
$p(1)=1$, $p(2)=4$ and $p(3)=8$ is $(n^2+3n-2)/2$.
What is more important, the conjecture
agrees with other conjectures in the theory of
PI-algebras.

Drensky and Rashkova \cite{DrRa} studied weak polynomial identities of matrices.
(The knowledge of weak polynomial identities is a key moment of the construction of central polynomials by Razmyslov \cite{Raz1}.)
In particular the obtained by computer results explained the existence of a central polynomial of degree 8 for $3\times 3$ matrices.

For $n=4$ Drensky and Piacentini Cattaneo \cite{DrPC} found
a new central polynomial of degree 13. Pay attention:
\[
\frac{1}{2}(4^2+3\cdot 4-2)=13.
\]
Unfortunately we do not know whether $M_4(K)$ has central polynomials of degree 12.
The construction uses a weak polynomial identity of degree 9 found by computer
(with methods similar to those of Drensky and Rashkova \cite{DrRa})
and combines the methods of Formanek \cite{Fo1} and Razmyslov \cite{Raz1}.
The result was generalized by Drensky \cite{Dr2} to construct central
polynomials of degree $(n-1)^2+4$ for $M_n(K)$, $n>2$.
(For $n=3$ and $n=4$ this agrees with Conjecture \ref{Conjecture of Formanek}.

There are many other proofs for the existence of central polynomials for the $n\times n$ matrix algebra.
Giambruno and Valenti \cite{GiVa} constructed central polynomials using techniques from invariant theory of matrices.

The proof of Kharchenko \cite{Kh} from 1979 is not constructive. There are several other nonconstructive proofs which work also over a field of positive characteristic.
The main ideas are similar. As the proof of Kharchenko, the proof of Braun \cite{Bra} in 1982
used old results by Amitsur from the 1950's,
before Kaplansky stated his problem. The more recent proofs of Bre\v{s}ar \cite{Bre1, Bre2}
are more self-contained and use techniques typical
for generalized polynomial identities and functional identities.

Bre\v{s}ar and Drensky \cite{BreDr} showed that starting with a multihomogeneous central polynomial for $M_n(K)$ when $K$ is an infinite field of positive characteristic $p$
one can produce a polynomial of the same degree with coefficients in the prime field ${\mathbb F}_p$ which is central for the $n\times n$ matrices over any field $F$ of characteristic $p$.
The proof is elementary and uses only standard combinatorial techniques. It also completes the nonconstructive proofs removing the requirement of the infinity of the field $K$ in some of them.

In this paper we discuss computational methods to determine whether the algebra $M_n(K)$ has central polynomials of a given degree $m$.
In more detail we consider the method which is similar to the method in \cite{DrKas1}
used to determine the polynomial identities of degree 8 form $M_3(K)$.

If we want to confirm the conjecture of Formanek we can use additional arguments to simplify the computations.
The first step in this direction is to show that there are no central polynomials of degree 12 for $M_4(K)$.
Applying our algorithm we show that over a field $K$ of characteristic 0 the algebra $M_4(K)$ has no central polynomials of degree $\leq 12$ in two variables.
Since the method works also for the study of the polynomial identities of $M_n(K)$,
as a byproduct of our computations we obtain that $M_4(K)$ does not have also polynomial identities in two variables of degree $\leq 12$.
All computations are performed on usual personal computers and laptops using freely available software.

Concluding the introduction, the class of algebras with polynomial identities, i.e. PI-algebras,
is sufficiently big and contains important classes of algebras (e.g. all commutative and all finite dimensional algebras).
On the other hand it is reasonably small and has rich structure and combinatorial theory.
The theory of polynomial identities and central polynomials for matrices is an important part of the theory of PI-algebras.
It is closely related with structure and combinatorial ring theory, commutative and noncommutative invariant theory and other branches of algebra, see the books
listened in the end of the introduction. Below we shall discuss one more actively studied problem, also related with the topic of our paper:
the description of the images of polynomials on noncommutative algebras. The following conjecture was originally posed by Kaplansky
and later formulated by L'vov \cite[Question 1.98]{DnN}.

\begin{conjecture}\label{L'vov-Kaplansky}
Let $p$ be a multilinear polynomial. Then the set of values
of $p$ on the matrix algebra $M_n(K)$ over an infinite field $K$ is a vector space.
\end{conjecture}

It is easy to see that this conjecture is equivalent to the following.

\begin{conjecture}\label{L'vov-Kaplansky revised}
If $p$ is a multilinear polynomial evaluated on the matrix algebra $M_n(K)$, then
$\text{\rm Im}(p)$ is either $\{0\}$, $K$, $sl_n(K)$, or $M_n(K)$. Here $K$ is identified with the set of scalar matrices
and $sl_n(K)$ is the set of matrices of trace zero.
\end{conjecture}

Conjecture \ref{L'vov-Kaplansky revised} is confirmed by Kanel-Belov, Malev and Rowen \cite{K-BMRow}
only for $2\times 2$ matrices over a quadratically closed field $K$ (i.e. for fields $K$ which contain the zeros of all
nonconstant polynomials $f(x)\in K[x]$ of degree 2).

\begin{theorem}\cite[Theorem 2]{K-BMRow}
If $p$ is a multilinear polynomial evaluated on the matrix algebra $M_2(K)$ over a quadratically closed field $K$, then $\text{\rm Im}(p)$ is either
$\{0\}$, $K$, $sl_2$, or $M_2(K)$.
\end{theorem}

A survey on Conjecture \ref{L'vov-Kaplansky revised} is given by Kanel-Belov, Malev, Rowen and Yavich \cite{K-BMRowY}
and the images of polynomials on other noncommutative algebras were studied by
Kanel-Belov, Malev, Pines and Rowen \cite{K-BMPRow}. See also the very recent paper by Vitas \cite{V}.

For further reading and the necessary information needed for our paper we recommend some of the books (listed chronologically)
by Procesi \cite{Pro}, Jacobson \cite{Ja}, Rowen \cite{Row1, Row2, Row3}, Razmyslov \cite{Raz2}, Kemer \cite{Kem}, Drensky \cite{Dr3}, Drensky and Formanek \cite{DrFo},
Giambruno and Zaicev \cite{GiZa}, Kanel-Belov and Rowen \cite{K-BRow}, Kanel-Belov, Karasik and Rowen \cite{K-BRowKar},
Aljadeff, Giambruno, Procesi and Regev \cite{AlGiProReg}.

\section{Preliminaries}
Recall that a polynomial $f(x_1,\ldots,x_d)\in K\langle X\rangle$ is a polynomial identity for the algebra $R$ if
\[
f(r_1,\ldots,r_d)=0\text{ for all }r_1,\ldots,r_d\in R.
\]
We denote by $T(R)$ the ideal of all polynomial identities of $R$. Such ideals are invariant under all endomorphisms of $K\langle X\rangle$ and are called T-ideals.
In other words, the ideal $I$ of $K\langle X\rangle$ is a T-ideal if and only if for any $f(x_1,\ldots,x_d)\in I$ and any $u_1(X),\ldots,u_d(X)\in K\langle X\rangle$
we have $f(u_1(X),\ldots,u_d(X))\in I$. We also denote by $\langle f\rangle^T$ the T-ideal of $K\langle X\rangle$ generated by the polynomial $f$.

The $d$-generated subalgebra $K\langle X_d\rangle=K\langle x_1,\ldots,x_d\rangle$ of $K\langle X\rangle$ is graded
\[
K\langle X_d\rangle=\bigoplus_{m\geq 0}K\langle X_d\rangle^{(m)},
\]
where $K\langle X_d\rangle^{(m)}$ consists of all homogeneous polynomials of degree $m$.
This algebra is also multigraded
\[
K\langle X_d\rangle=\bigoplus_{m_i\geq 0}K\langle X_d\rangle^{(m_1,\ldots,m_d)},
\]
where $K\langle X_d\rangle^{(m_1,\ldots,m_d)}$ consists of all polynomials which are homogeneous of degree $m_i$ in $x_i$, $i=1,\ldots,d$.
We shall denote by $P_m$ the multilinear component $K\langle x_1,\ldots,x_m\rangle^{(1,\ldots,1)}$ of degree $m$ of $K\langle x_1,\ldots,x_m\rangle$.

Since we work over a field $K$ of characteristic 0, if
\[
f(x_1,\ldots,x_d)=\sum f^{(m_1,\ldots,m_d)}(x_1,\ldots,x_d)\in K\langle x_1,\ldots,x_d\rangle
\]
is a polynomial identity for the algebra $R$, then the multihomogeneous components $f^{(m_1,\ldots,m_d)}(x_1,\ldots,x_d)$ of $f(x_1,\ldots,x_d)$ are also
polynomial identities. Every multihomogeneous polynomial identity of total degree $m$ is equivalent to its linearization in $P_n$.
The same properties hold also for the central polynomials: their multihomogeneous components are either central polynomials or polynomial identities
and the linearization of a multihomogeneous central polynomial is also central.

One of the most powerful methods in the study of PI-algebras is based on representation theory of the symmetric group $S_m$ and the general linear group $GL_d(K)$.
For a background on representation theory of $S_m$ and $GL_d(K)$ see e.g. the books by James and Kerber \cite{JamKer}, Macdonald \cite{Mac} and Weyl \cite{We}.
The information for the applications to the theory of PI-algebras can be found in most of the books cited in the introduction, e.g. in \cite[Chapter 12]{Dr3}.

The symmetric group $S_m$ acts from the left on $P_m$ by
\[
\sigma:f(x_1,\ldots,x_m)\to f(x_{\sigma(1)},\ldots,x_{\sigma(m)}),\sigma\in S_m,f(x_1,\ldots,x_m)\in P_m,
\]
and as a left $S_m$-module $P_m$ is isomorphic to the group algebra $KS_m$. Hence $P_m$ decomposes as a direct sum of irreducible components $M(\lambda)$
\[
P_n=\bigoplus_{\lambda\vdash m}d_{\lambda}M(\lambda),
\]
where the summation runs on all partitions $\lambda=(\lambda_1,\ldots,\lambda_m)$ of $m$ and $d_{\lambda}=\dim M(\lambda)$.
For a fixed $\lambda$ every direct summand $M_i(\lambda)\cong M(\lambda)$, $i=1,\ldots,d_{\lambda}$, has an element $f^{(i)}_{\lambda}(x_1,\ldots,x_m)$ such that
every $S_m$-submodule $M(\lambda)$ of $P_m$ is generated by a sum
\[
f_{\lambda}(x_1,\ldots,x_m)=\sum_{i=1}^{d_{\lambda}}\xi_if^{(i)}_{\lambda}(x_1,\ldots,x_m),\xi_i\in K.
\]
For a PI-algebra $R$ the vector space $P_m\cap T(R)$ is an $S_m$-submodule of $P_m$ and it is an important problem to describe its structure as an $S_m$-module.
By the theorem of Regev \cite{Reg} the dimension of the factor space $P_m(R)=P_m/(P_m\cap T(R))$ is bounded by $a^m$ for some positive $a$ and asymptotically this is much smaller than
$\dim P_m=m!$. Hence it is more convenient to consider the factor module $P_m/(P_m\cap T(R))$ and the $S_m$-cocharacter sequence
\[
\chi_m(R)=\chi_{S_m}(P_m/(P_m\cap T(R)))=\sum_{\lambda\vdash m}m_{\lambda}\chi_{\lambda},
\]
where $\chi_{\lambda}$ is the $S_m$-character corresponding to the irreducible $S_m$-module $M(\lambda)$.

The general linear group $GL_d(K)$ acts canonically on the $d$-dimensional vector space $KX_d$ with basis $X_d=\{x_1,\ldots,x_d\}$
and this action is extended diagonally on $K\langle X_d\rangle$.
The homogeneous component $K\langle X_d\rangle^{(m)}$ decomposes as a $GL_d(K)$-module
\[
K\langle X_d\rangle^{(m)}=\bigoplus_{\lambda\vdash m}d_{\lambda}W_d(\lambda),
\]
where $W_d(\lambda)$ is the irreducible $GL_d(K)$-module corresponding to the partition $\lambda=(\lambda_1,\ldots,\lambda_d)\vdash m$,
with the same multiplicity $d_{\lambda}$ as the multiplicity of $M(\lambda)$ in the decomposition of the $S_d$-module $P_m$.
For a fixed $\lambda$ every direct summand $W^{(i)}(\lambda)\cong W(\lambda)$, $i=1,\ldots,d_{\lambda}$, has an element $w^{(i)}_{\lambda}(x_1,\ldots,x_d)$
which is multihomogeneous of degree $\lambda=(\lambda_1,\ldots,\lambda_d)$ such that
every $GL_d(K)$-submodule $W(\lambda)$ of $F_d(R)$ is generated by a sum
\[
w_{\lambda}(x_1,\ldots,x_d)=\sum_{i=1}^{d_{\lambda}}\xi_iw^{(i)}_{\lambda}(x_1,\ldots,x_d),\xi_i\in K.
\]
The polynomial $w_{\lambda}(x_1,\ldots,x_d)$ is called the highest weight vector of $W(\lambda)$ and is characterized by the property
\[
w_{\lambda}(x_1,\ldots,x_{q-1},x_q+x_p,x_{q+1},\ldots,x_d)=w_{\lambda}(x_1,\ldots,x_d)
\]
for all $1\leq p<q\leq d$.

The tensor product $M(\lambda)\otimes_K M(\mu)$ of an $S_{m_1}$-module $M(\lambda)$ and an $S_{m_2}$-module $M(\mu)$
has the structure of $S_{m_1}\times S_{m_2}$-module.
The decomposition into a sum of irreducible $S_{m_1+m_2}$-modules of the induced $S_{m_1+m_2}$-module $(M(\lambda)\otimes_K M(\mu))\uparrow S_{m_1+m_2}$
can be obtained by the combinatorial Littlewood-Richardson rule.
The same rule gives the decomposition into a sum of irreducible $GL_d(K)$-submodules
of the tensor product $W(\lambda)\otimes_K W(\mu)$ of two irreducible polynomial
$GL_d(K)$-modules $W(\lambda)$ and $W(\mu)$.
In the special case of $d=2$ it has the following simple form.
If $\lambda=(p_1+p_2,p_2)$, $\mu=(q_1+q_2,q_2)$ and $p_1\geq q_1$, then
\[
W(\lambda)\otimes_KW(\mu)\cong W(p_1+p_2+q_1+q_2,p_2+q_2)\oplus
\]
\[
\oplus W(p_1+p_2+q_1+q_2-1,p_2+q_2+1)\oplus\cdots\oplus W(p_1+p_2+q_2,p_2+q_2+q_1).
\]
In the sequel all commutators are left normed:
\[
u_m=[x_{i_1},x_{i_2},x_{i_3},\ldots,x_{i_{m-1}},x_{i_m}]=[[x_{i_1},x_{i_2},x_{i_3},\ldots,x_{i_{m-1}}],x_{i_m}]].
\]
With few exceptions in Section \ref{section 4} we also assume that the commutators satisfy the condition $i_1>i_2\leq i_3\leq\cdots\leq i_{m-1}\leq i_m$.
A special role in the study of polynomial identities of unitary algebras is played by the proper polynomials which are linear combinations of products of commutators.
Modulo the ideal of $K\langle X_d\rangle$ generated by products of two commutators, the vector space spanned by commutators of length $m$ is isomorphic to the $GL_d(K)$-module $W(d-1,1)$.
Similarly, the vector space spanned by products of $k$ commutators of length $m_1,\ldots,m_k$ modulo the ideal generated by products of $k+1$ commutators
is isomorphic to the tensor product of $GL_d(K)$-modules
\[
W(m_1-1,1)\otimes_K\cdots\otimes_KW(m_k-1,1).
\]

The ideal $K\langle X_d\rangle\cap T(R)$ is a $GL_d(K)$-submodule of $K\langle X_d\rangle$ and this gives the structure of a $GL_d(K)$-module
of the relatively free algebra $F_d(R)=K\langle X_d\rangle/(K\langle X_d\rangle\cap T(R))$ in the variety of algebras $\text{var}(R)$ generated by $R$.
By a theorem of Berele \cite{Ber} and Drensky \cite{Dr1} the homogeneous component $F_d(R)^{(m)}$ of degree $m$ of $F_d(R)$ decomposes as
\[
F_d(R)^{(m)}=\bigoplus_{\lambda\vdash m}m_{\lambda}W_d(\lambda),
\]
and for $\lambda=(\lambda_1,\ldots,\lambda_d)\vdash n$ the multiplicity $m_{\lambda}$ is the same as in the $m$-th cocharacter $\chi_m(R)$.

\section{Possible approaches to the problem}
In this section we shall describe several approaches to the problem for the existence of central polynomials of degree lower than the degree in the conjecture of Formanek.

\subsection{The method of multilinear polynomials}
We can use the following method to determine whether the matrix algebra $M_n(K)$ has central polynomials of degree $m$.
It is sufficient to consider multilinear polynomials only. Let
\[
f(x_1,\ldots,x_m)=\sum_{\sigma\in S_m}\xi_{\sigma}x_{\sigma(1)}\cdots x_{\sigma(m)}
\]
be the candidate for a central polynomial of degree $m$ with $m!$ unknown coefficients $\xi_{\sigma}\in K$, $\sigma\in S_m$.
We replace the variables $x_i$ with all possible matrices $E_{p_iq_i}$, $p_i,q_i=1,\ldots,n$, where $\{E_{ab}\mid 1\leq a,b\leq n\}$ is the standard basis of $M_n(K)$:
\[
f(E_{p_1q_1},\ldots,E_{p_mq_m})=\sum_{r,s=1}^nf^{(p,q)}_{rs}E_{rs},
\]
where $f^{(p,q)}_{rs}$, $(p,q)=((p_1,q_1),\ldots,(p_m,q_m))$, are linear combinations of $\xi_{\sigma}$.
We consider the linear homogeneous system with $m!$ unknowns $\xi_{\sigma}$
\[
f^{(p,q)}_{rs}=0,\,r,s=1,\ldots,m,\,(p,q)=((p_1,q_1),\ldots,(p_m,q_m)).
\]
The solutions of the system are all multilinear polynomial identities of degree $m$ for $M_n(K)$. If we determine the rank of the system $\text{rank}_{\text{PI}}$, this would give us
the dimension of the vector space of the polynomial identities. Then consider the system which consists of the equations with $r\not=s$ and add the equations of the form

\[
f^{(p,q)}_{11}=\cdots=f^{(p,q)}_{nn}.
\]
The solutions of the system give the polynomial identities and the central polynomials of degree $m$. Let the rank of the system be $\text{rank}_{\text{central}}$.
Clearly $\text{rank}_{\text{PI}}\leq\text{rank}_{\text{central}}$.
If the ranks of both systems are equal, this means that there are no central polynomials. If $\text{rank}_{\text{PI}}<\text{rank}_{\text{central}}$ using computational methods from linear algebra,
e.g. the Gauss elimination method, we can find $\text{rank}_{\text{central}}-\text{rank}_{\text{PI}}$ solutions of the second system which are linearly independent modulo the solutions of the first system
and these polynomials will span the vector space of the central polynomials.

If we show that there are no central polynomials of degree $m$, this immediately implies that there also are no central polynomials of degree $j<m$:
If $c(x_1,x_2,\ldots,x_j)$ is a central polynomial of degree $j<m$ and $c(r_1,\ldots,r_j)\not=0$ for some $r_1,\ldots,r_j\in M_n(K)$,
then the polynomial $c(x_1x_{j+1}\cdots x_m,x_2,\ldots,x_j)$ is either central or a polynomial identity because it takes only central values evaluated on $M_n(K)$.
Since
\[
c(r_1I_n\cdots I_n,r_2,\ldots,r_j)=c(r_1,\ldots,r_j)\not=0,
\]
we conclude that $c(x_1x_{j+1}\cdots x_m,x_2,\ldots,x_j)$ is a central polynomial of degree $m$.

This method is not very effective even for small $n$ and $m$. For example, if we want to prove that $M_4(K)$ does not have central polynomials of degree 12,
we have to find the ranks of two homogeneous linear systems with $12!=479001600$ unknown coefficients $\xi_{\sigma}\in K$, $\sigma\in S_{12}$.

\subsection{The method of $S_m$-representation theory}
Since the vector space $P_m$ of the multilinear polynomials of degree $m$ in $K\langle X\rangle$ is a left $S_m$-module,
for each partition $\lambda=(\lambda_1,\ldots,\lambda_m)$ of $m$ we consider a linear homogeneous system with
number of unknowns equal to the degree $d_{\lambda}$ of the associated irreducible character $\chi_{\lambda}$.
Instead of linear combinations of monomials $x_{\sigma(1)}\cdots x_{\sigma(m)}$ we consider the polynomials in $P_m$
\[
\sum_{i=1}^{d_{\lambda}}\eta_if_{\lambda}^{(i)}(x_1,\ldots,x_m)
\]
where every linear combinations of the polynomials $f_{\lambda}^{(i)}$ generates an  irreducible $S_m$-module.
Then we repeat the method described in the previous subsection for the linear combinations of monomials and compare the ranks of the homogeneous linear systems.
The number of the unknowns is much smaller than $m!$ (for example, for $m=12$ the number of the unknowns is $\leq 7700$), but the polynomials $f_{\lambda}^{(i)}$ are more complicated.
This method was used for example by Bondari \cite{Bo}.

The method of representations of $S_m$ has been used in many other cases.
For example, Leron \cite{Ler} found that for $n>2$ all polynomial identities of $M_n(K)$ of degree $2n+1$
follow from the standard identity $s_{2n}(x_1,\ldots,x_{2n})=0$.
Olsson and Regev \cite{OlsReg} calculated the $S_{m+1}$-character of degree $m+1$
of the consequences of degree $m+1$ of the standard identity $s_m(x_1,\ldots,x_m)=0$.

\begin{theorem}\cite[Theorem 20]{Ler}, \cite[Theorem 1]{OlsReg}\label{Olsson and Regev}
The $S_{2n+1}$-character of the
multilinear polynomial identities of degree $2n+1$ of $M_n(K)$, $n\geq 3$, is
\[
\chi_{S_{2n+1}}(P_{2n+1}\cap T(M_n(K)))=\chi(3,1^{2n-2})+\chi(2^2,1^{2n-1})+
\]
\[
+3\chi(2,1^{2n-1})+\chi(1^{2n+1}).
\]
\end{theorem}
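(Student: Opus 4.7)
The plan is as follows. By Leron's theorem cited in the statement, for $n\geq 3$ every multilinear polynomial identity of degree $2n+1$ for $M_n(K)$ lies in the T-ideal generated by the standard identity $s_{2n}$; hence $V := P_{2n+1}\cap T(M_n(K))$ coincides with the $S_{2n+1}$-submodule of $P_{2n+1}$ spanned by all multilinear consequences of $s_{2n}$. Since $s_{2n}$ has degree $2n$, every such consequence is a linear combination of $S_{2n+1}$-translates of three basic polynomials: $f_L := x_{2n+1}s_{2n}(x_1,\ldots,x_{2n})$, $f_R := s_{2n}(x_1,\ldots,x_{2n})x_{2n+1}$, and the merged pair $f_+ := s_{2n}(x_1,\ldots,x_{2n-1},x_{2n}x_{2n+1})$, $f_- := s_{2n}(x_1,\ldots,x_{2n-1},x_{2n+1}x_{2n})$.

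Next I would compute upper bounds for the characters of the $S_{2n+1}$-modules generated by each type using induced characters and Pieri's rule. Since $s_{2n}$ generates the sign representation $M(1^{2n})$ as an $S_{2n}$-module, Pieri gives $M(1^{2n})\uparrow^{S_{2n+1}}_{S_{2n}} = M(2,1^{2n-1})\oplus M(1^{2n+1})$, which bounds the submodules generated by $f_L$ and by $f_R$. For $f_\pm$, consider the subgroup $S_{2n-1}\times S_2\subset S_{2n+1}$ with $S_{2n-1}$ permuting $x_1,\ldots,x_{2n-1}$ and $S_2$ exchanging $x_{2n}$ with $x_{2n+1}$: then $\{f_+,f_-\}$ generates the $S_{2n-1}\times S_2$-module $M(1^{2n-1})\boxtimes(M(2)\oplus M(1^2))$. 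By the Littlewood--Richardson rule,
\[
[M(1^{2n-1})\boxtimes M(2)]\uparrow^{S_{2n+1}} = M(3,1^{2n-2})\oplus M(2,1^{2n-1}),
\]
\[
[M(1^{2n-1})\boxtimes M(1^2)]\uparrow^{S_{2n+1}} = M(1^{2n+1})\oplus M(2,1^{2n-1})\oplus M(2^2,1^{2n-3}).
\]
Summing the three contributions produces the naive upper bound $\chi(3,1^{2n-2})+\chi(2^2,1^{2n-3})+4\chi(2,1^{2n-1})+3\chi(1^{2n+1})$ for $\chi_{S_{2n+1}}(V)$, of total dimension $2(2n+1)(n+1)$, whereas the target character has dimension $4n(n+1)$.

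The heart of the proof is to identify the kernel of the surjection from this direct sum onto $V$, which must collapse exactly $2(n+1)$ dimensions. Two families of relations suffice. The standard polynomial $s_{2n+1}$, which spans $M(1^{2n+1})$, admits three distinct presentations as a consequence of $s_{2n}$: as the Laplace-type expansion $\sum_j(-1)^{j-1}x_j s_{2n}(x_1,\ldots,\widehat{x_j},\ldots)$ living in $L$, its right-hand mirror in $R$, and a signed combination of $f_\pm$-generators in $M$; these two identifications drop the multiplicity of $M(1^{2n+1})$ from $3$ to $1$. The derivation-type identity
\[
s_{2n}(x_1,\ldots,x_{2n})x_{2n+1}-x_{2n+1}s_{2n}(x_1,\ldots,x_{2n}) = \sum_{j=1}^{2n}s_{2n}(x_1,\ldots,[x_j,x_{2n+1}],\ldots,x_{2n}),
\]
obtained by moving $x_{2n+1}$ past $s_{2n}$ one variable at a time, gives a single $S_{2n+1}$-equivariant relation linking the $L$, $R$ and $M$ contributions, and lowers the multiplicity of the hook $M(2,1^{2n-1})$ from $4$ to $3$.

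The main obstacle is the precise bookkeeping: showing that no further relations exist. The cleanest way to close the argument is a dimension match --- the two families of relations above eliminate $2\dim M(1^{2n+1})+\dim M(2,1^{2n-1})=2+2n=2(n+1)$ dimensions, bringing the upper bound down to exactly $4n(n+1)$, so any additional relation would force the character of $V$ to be strictly smaller than claimed. If the dimension match is not decisive on its own, I would back it up by exhibiting, for each $\lambda$ in the statement, an explicit multilinear highest-weight vector in $V$ of shape $\lambda$ (obtained by applying the appropriate Young symmetrizer to $f_L$, $f_R$, or $f_\pm$) and verifying linear independence by evaluation on a judicious choice of matrix units, using the non-vanishing of $s_{2n-1}$ on $M_n(K)$ (from Amitsur--Levitzki) to detect each required copy.
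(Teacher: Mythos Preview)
The paper does not give a proof of this theorem: it is quoted from \cite{Ler} and \cite{OlsReg} as a known result and used as a black box (e.g.\ in the proof of Corollary~\ref{central polynomials of degree <10}). So there is nothing in the paper to compare your argument against.

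On its own merits, your outline follows essentially the Olsson--Regev strategy: Leron reduces $P_{2n+1}\cap T(M_n(K))$ to the degree-$(2n{+}1)$ consequences of $s_{2n}$; Frobenius reciprocity together with Pieri/Littlewood--Richardson bounds the $S_{2n+1}$-module generated by $f_L$, $f_R$, $f_\pm$; and explicit relations among these generators cut the bound down. Two comments. First, the statement as printed contains a typo: the partition $(2^2,1^{2n-1})$ has size $2n+3$, not $2n+1$; the correct term is $\chi(2^2,1^{2n-3})$, which is what you computed. Second, your ``dimension match'' step is circular as written. Saying that ``any additional relation would force the character of $V$ to be strictly smaller than claimed'' presupposes the claim you are trying to prove. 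The lower bound---producing, for each shape $\lambda$ in the formula, enough highest-weight vectors in $V$ that stay linearly independent under evaluation on $M_n(K)$---is not a backup but the actual other half of the argument. In particular you must exhibit three independent copies of $M(2,1^{2n-1})$ inside $V$ and one each of $M(3,1^{2n-2})$ and $M(2^2,1^{2n-3})$; the non-vanishing of $s_{2n-1}$ on $M_n(K)$ from Amitsur--Levitzki is indeed the right tool, but this verification has to be carried out, not just announced.
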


For applications to the study of polynomial identities of nonassociative algebras see e.g.
the survey by Bremner, Madariaga and Peresi \cite{BreMaPe}.

\subsection{The method of multihomogeneous polynomials}
As in the case of multilinear polynomials, consider the polynomial $f(x_1,\ldots,x_d)$ which is homogeneous of multidegree $\mu=(\mu_1,\ldots,\mu_d)$
\[
f(x_1,\ldots,\mu_d)=\sum_i\vartheta_ix_{i_1}\cdots x_{i_m}.
\]
The number of unknowns $\vartheta_i$ is equal to the number of monomials of multidegree $\mu$ in $K\langle X_d\rangle$ which is
$\displaystyle \frac{(\mu_1+\cdots+\mu_d)!}{m_1!\cdots m_d!}$ but we have to replace $x_1,\ldots,x_d$ with generic $n\times n$ matrices
\[
y_1=\left(\begin{matrix}y^{(1)}_{11}&\cdots&y^{(1)}_{1n}\\
\vdots&\ddots&\vdots\\
y^{(1)}_{n1}&\cdots&y^{(1)}_{nn}\\
\end{matrix}\right),\ldots,
y_d=\left(\begin{matrix}y^{(d)}_{11}&\cdots&y^{(d)}_{1n}\\
\vdots&\ddots&\vdots\\
y^{(d)}_{n1}&\cdots&y^{(d)}_{nn}\\
\end{matrix}\right).
\]
To handle the central polynomials in two variables of degree 12 we have to consider 6 linear systems
with 12, 66, 220, 495, 792 and 924 unknowns, for the cases of bigraded identities of degree
(11,1), (10,2), (9,3), (8,4), (7,5) and (6,6), respectively.

\subsection{The method of representations of $GL_d(K)$}\label{the method of GL-representations}
In the theory of PI-algebras the results obtained in the language of representation theory of the general linear group
can be easily restated in the language of representation theory of the symmetric group.
The advantage is that the considered polynomials are simpler than in the case of the symmetric group.
For applications of the method to PI-algebras see e.g. \cite[Chapter 12]{Dr3}.

For our purposes we can fix a partition $\lambda=(\lambda_1,\ldots,\lambda_d)$ and consider
the highest weight vector of the irreducible $GL_d(K)$-module $W(\lambda)$
\[
w_{\lambda}(x_1,\ldots,x_d)=\sum_{i=1}^{d_{\lambda}}\xi_iw^{(i)}_{\lambda}(x_1,\ldots,x_d),\xi_i\in K.
\]
As in the previous subsection we replace the variables $x_1,\ldots,x_d$ with generic $n\times n$ matrices
and compare the ranks of the homogeneous linear systems corresponding to the polynomial identities and the central polynomials.
As in the case when we use representations of $S_m$, the number of unknowns $\xi_i$ is much smaller than
in the case when we work with multihomogeneous polynomials.
For example, if we are interested in central polynomials in two variables of degree 12,
we have to solve 6 systems with, respectively, 11, 54, 154, 275, 297 and 132 unknowns.

The representation theory of $GL_d(K)$ was used by Benanti and Drensky \cite{BenDr} who found all consequences of degree $m+2$
of the standard identity $s_m(x_1,\ldots,x_m)=0$.

\begin{theorem}\cite[Theorem 3.1]{BenDr}\label{Consequences of S_m}
The $S_{m+2}$-character of the
multilinear consequences of degree $m+2$ of
the standard polynomial $s_m(x_1,\ldots,x_m)$, $m\geq 5$, is
\[
\chi_{S_{m+2}}(P_{m+2}\cap\langle s_m\rangle^T)=
2\chi(4,2,1^{m-4})+4\chi(4,1^{m-2})+2\chi(3,2^2,1^{m-5})+
\]
\[
+8\chi(3,2,1^{m-3})+9\chi(3,1^{m-1})+4\chi(2^3,1^{m-4})+
\]
\[
+8\chi(2^2,1^{m-2})+5\chi(2,1^m)+\chi(1^{m+2}).
\]
\end{theorem}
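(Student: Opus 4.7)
The plan is to determine the multiplicity $m_\lambda$ of each irreducible character $\chi_\lambda$ in $\chi_{S_{m+2}}(P_{m+2}\cap\langle s_m\rangle^T)$ for every partition $\lambda\vdash m+2$. By the Berele--Drensky equivalence between $S_m$-cocharacters and $GL_d(K)$-multiplicities, $m_\lambda$ equals the dimension of the space of highest weight vectors of weight $\lambda$ inside $K\langle x_1,\ldots,x_d\rangle^{(m+2)}\cap\langle s_m\rangle^T$ for $d\geq\ell(\lambda)$. This recasts the problem as counting, for each $\lambda$, the highest weight consequences of $s_m$ of a prescribed shape, and it is this $GL_d$-formulation that keeps the book-keeping manageable.

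The first step is to enumerate the types of elementary consequence of $s_m$ that contribute in degree $m+2$: \emph{(i)} pure sandwich consequences $u\cdot s_m(x_{i_1},\ldots,x_{i_m})\cdot v$ with $u,v$ monomials in the two extra letters and $|u|+|v|=2$; \emph{(ii)} one-argument insertions combined with one external factor, e.g.\ $s_m(x_1x_{m+1},x_2,\ldots,x_m)\cdot x_{m+2}$; \emph{(iii)} double insertions into a single argument, such as $s_m(x_1x_{m+1}x_{m+2},x_2,\ldots,x_m)$ and $s_m(x_1x_{m+2}x_{m+1},x_2,\ldots,x_m)$; \emph{(iv)} single insertions into two distinct arguments, such as $s_m(x_1x_{m+1},x_2x_{m+2},x_3,\ldots,x_m)$. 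Unlike the case $m=2$, where the commutator Leibniz identity $s_2(uv,w)=s_2(u,w)v+u\,s_2(v,w)$ reduces all substitutions to pure sandwiches, for $m\geq 3$ no such derivation identity holds (as one checks directly for $s_3$), so types \emph{(ii)}--\emph{(iv)} produce genuinely new $S_{m+2}$-submodules. This already forces the appearance of the shapes $(4,1^{m-2})$, $(4,2,1^{m-4})$, $(3,2,1^{m-3})$, $(3,2^2,1^{m-5})$ and $(2^3,1^{m-4})$ in the answer, since type~\emph{(i)} alone yields only $(3,1^{m-1})$, $(2,1^m)$, $(2^2,1^{m-2})$ and $(1^{m+2})$.

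Next, for each type I would compute the $S_{m+2}$-character of the cyclic submodule it generates as an induced character $\mathrm{Ind}_H^{S_{m+2}}(V)$ from the relevant Young subgroup $H$, with $V$ the sign character of $s_m$ twisted by the symmetry type of the extra letters, and decompose by the Littlewood--Richardson / Pieri rule. For example, type~\emph{(i)} is $\mathrm{Ind}_{S_m}^{S_{m+2}}(\mathrm{sgn})$, which after induction in stages through $S_m\times S_2$ and Pieri yields $\chi(3,1^{m-1})+2\chi(2,1^m)+\chi(2^2,1^{m-2})+\chi(1^{m+2})$; the other three types produce richer characters that contain the shapes with a row of length $\geq 3$ and/or three rows of length $2$, in accordance with which arguments of $s_m$ have absorbed extra letters.

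The main obstacle is the overlap analysis: naively summing the cyclic-submodule characters overcounts, because the same irreducible arises from several types and because the antisymmetry of $s_m$ produces nontrivial relations among the generators (these are what produce multiplicities as large as $8$ and $9$ in the formula). The cleanest remedy is to work on the $GL_d$ side: for each partition $\lambda$ in the formula, construct $m_\lambda$ explicit linearly independent highest weight vectors of weight $\lambda$ lying in $\langle s_m\rangle^T$, obtained from $s_m$ applied to a controlled family of monomials in $x_1,\ldots,x_d$, and separately prove the matching upper bound by a parameter count on the space of highest weight consequences of the given weight modulo antisymmetry of $s_m$. The hypothesis $m\geq 5$ is precisely what is needed for the partition $(3,2^2,1^{m-5})$ to be well-defined and for the counting to stabilize. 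A natural sanity check is to compare $\sum m_\lambda d_\lambda$ with a direct computer computation of $\dim(P_{m+2}\cap\langle s_m\rangle^T)$ for $m=5$ or $m=6$.
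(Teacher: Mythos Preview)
This theorem is not proved in the present paper; it is quoted from Benanti and Drensky \cite{BenDr}, and the surrounding text only records that their argument uses the representation theory of $GL_d(K)$. There is therefore no in-paper proof to compare your proposal against.

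Your outline is broadly compatible with the methodology the paper attributes to \cite{BenDr}: passing to highest weight vectors via the Berele--Drensky correspondence and classifying the degree-$(m+2)$ consequences of $s_m$ by type is the right framework, and your sample computation $\mathrm{Ind}_{S_m}^{S_{m+2}}(\mathrm{sgn})=\chi(3,1^{m-1})+\chi(2^2,1^{m-2})+2\chi(2,1^m)+\chi(1^{m+2})$ for the pure sandwich consequences is correct. That said, what you have written is a plan rather than a proof. The substantive content of the theorem is the precise list of multiplicities $2,4,2,8,9,4,8,5,1$, and obtaining these requires actually executing the overlap analysis you flag as ``the main obstacle''; the sentence ``separately prove the matching upper bound by a parameter count'' conceals essentially all of the work. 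In the original argument this is done by writing down, for each relevant $\lambda$, an explicit spanning set of highest weight vectors among the consequences and then determining the linear relations among them (the hypothesis $m\geq 5$ being exactly what is needed for the pattern to stabilize). Your proposal would need to be carried to that level of detail before it constitutes a proof.
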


There was a conjecture that for $n>2$ all polynomial identities for $M_n(K)$ of degree $2n+2$
follow from the standard identity $s_{2n}=0$.
For $n=3$ this was confirmed in \cite{DrKas1}.
The computer calculations for the polynomial identities for $M_n(K)$ of degree $2n+2$ for $n=4,5$
were performed in \cite{BenDeDrKo1} using $64$ processors on the Cray T3E at the Lawrence Berkeley Lab
(see \cite{BenDeDrKo2} for details).
It took a total of about $8$ hours to complete
the computations for the identities of degree 12
for $5\times 5$ matrices from start to finish.
(In order to compare the results with the known facts,
six hours of this time were spent for calculations which follow from
other arguments.) As a result, the conjecture for the polynomial identities of degree $2n+2$ for $M_n(K)$ was confirmed also for $n=4,5$.

\begin{theorem}\label{PI of degree n+2}
All polynomial identities of degree $2n+2$ for $M_n(K)$, $n=3,4,5$, follow from the standard identity $s_{2n}(x_1,\ldots,x_{2n})=0$.
\end{theorem}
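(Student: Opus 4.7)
The plan is to verify that the $S_{2n+2}$-cocharacter of the multilinear polynomial identities of $M_n(K)$ in degree $2n+2$ coincides with the $S_{2n+2}$-character of the multilinear consequences of $s_{2n}$ supplied by Theorem \ref{Consequences of S_m} (with $m=2n$). The Amitsur--Levitzki theorem gives $s_{2n}\in T(M_n(K))$, hence the inclusion $\langle s_{2n}\rangle^T\cap P_{2n+2}\subseteq T(M_n(K))\cap P_{2n+2}$ of $S_{2n+2}$-submodules of $P_{2n+2}$. Since we are in characteristic zero, every multihomogeneous identity of degree $2n+2$ is equivalent to its full linearization in $P_{2n+2}$, so working with multilinear identities suffices. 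If the two $S_{2n+2}$-submodules of $P_{2n+2}$ have identical characters, then by complete reducibility they coincide, which is the statement to be proved.

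To compute the cocharacter of $T(M_n(K))\cap P_{2n+2}$ I would apply the $GL_d(K)$-representation-theoretic method of Section \ref{the method of GL-representations}. For each partition $\lambda\vdash 2n+2$ of length $d$ (with $d\leq n^2$ by the Regev support bounds for $M_n(K)$), parametrize the highest weight vectors of the $\lambda$-isotypic component as $w_\lambda=\sum_{i=1}^{d_\lambda}\xi_iw_\lambda^{(i)}$, substitute generic $n\times n$ matrices $y_1,\ldots,y_d$, and read off the homogeneous linear system in the $\xi_i$ whose solutions correspond to $w_\lambda$ being a polynomial identity for $M_n(K)$. The corank of this system is the multiplicity $m_\lambda$ of $W_d(\lambda)$ in $F_d(M_n(K))^{(2n+2)}$, which by the Berele--Drensky theorem equals the multiplicity of $\chi_\lambda$ in $\chi_{2n+2}(M_n(K))$; the multiplicity of $\chi_\lambda$ inside $T(M_n(K))\cap P_{2n+2}$ is therefore $d_\lambda-m_\lambda$.

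Running this partition by partition and comparing against the expected multiplicities produced by Theorem \ref{Consequences of S_m} settles the case at a given $n$. For $n=3$ the dimensions are small enough that this was carried out essentially by hand in \cite{DrKas1}. For $n=4,5$ one must invoke \cite{BenDeDrKo1,BenDeDrKo2}, where the computation was performed in parallel on the Cray T3E. The main obstacle is purely computational: for $n=5$ the degree is $12$, the partitions $\lambda\vdash 12$ proliferate, and per partition the linear systems have up to several thousand unknowns with dense rational (or, modulo a large prime, modular) coefficient vectors once the generic-matrix evaluation is expanded. Splitting the work across partitions and across their multigraded pieces, controlling memory during the rank computations, and cross-checking the output against each line of Theorem \ref{Consequences of S_m} with no slack is the substantive work; a single mismatched multiplicity would produce a genuine new identity not following from $s_{2n}$. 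Once every multiplicity agrees, the two submodules of $P_{2n+2}$ coincide and the theorem follows.
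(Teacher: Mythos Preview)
Your proposal is correct and follows precisely the route the paper itself takes: the paper does not prove this theorem independently but records it as a consequence of the computations in \cite{DrKas1} (for $n=3$) and \cite{BenDeDrKo1,BenDeDrKo2} (for $n=4,5$), carried out by exactly the $GL_d(K)$-highest-weight-vector method you describe and then matched against the character in Theorem~\ref{Consequences of S_m}. One small slip: the \emph{rank} of the linear system on the $\xi_i$ equals the multiplicity $m_\lambda$ of $W_d(\lambda)$ in $F_d(M_n(K))^{(2n+2)}$, while its \emph{corank} (nullity) equals the multiplicity $d_\lambda-m_\lambda$ of $\chi_\lambda$ in $T(M_n(K))\cap P_{2n+2}$; you have these interchanged, though your final formula $d_\lambda-m_\lambda$ for the identity side is the right one.
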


The calculations for $n=5$ and $m=12$ as performed in
\cite{BenDeDrKo1} demonstrate the possibilities and the advantages of the
computational method based on representations of $GL_d(K)$. As in the case of representations of $S_{12}$
we have to consider 77 systems with up to 7700 unknowns but the computations are much simpler.
The methods used in \cite{BenDeDrKo1} allowed only to find sufficiently good bounds for the ranks of the systems.
But these bounds meet the conjectured ones.
One would decrease the number of the systems to 33, the number of unknowns
to 5775 and the computing time to two hours if one used all the known information for the polynomial identities of $M_n(K)$.

\subsection{Representations of $GL_d(K)$ combined with PI-theory}
The computations for the central polynomials of minimal degree can be simplified significantly if we use more information from the theory of PI-algebras.

\begin{lemma}\label{relation with S_m-characters}
Let $M_n(K)$ have a central polynomial $c(x_1,\ldots,x_d)$ of degree $m$. Then the multilinear component $P_{m+1}\cap T(M_n(K))$
contains an irreducible $S_{m+1}$-submodule $M(\lambda)$ for some $\lambda=(\lambda_1,\ldots,\lambda_d)\vdash m+1$.
\end{lemma}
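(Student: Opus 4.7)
The plan is to reduce the claim to the nonvanishing of $K\langle X_d\rangle^{(m+1)}\cap T(M_n(K))$, and then invoke the Berele--Drensky dictionary to pass from a $GL_d(K)$-summand of this T-ideal piece to an $S_{m+1}$-summand of $P_{m+1}\cap T(M_n(K))$. Because multihomogeneous components of central polynomials are themselves either central polynomials or identities, I may assume from the outset that $c$ is multihomogeneous of some multidegree $(m_1,\ldots,m_d)$ with $m_1+\cdots+m_d=m$.

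The first task is to produce a nonzero polynomial identity of $M_n(K)$ of degree $m+1$ that lives in $K\langle X_d\rangle$, rather than in a larger number of variables. For every $i$ the commutator $[c,x_i]$ vanishes on $M_n(K)$ because $c$ is central, so I only have to show that $[c,x_i]\neq 0$ in $K\langle X_d\rangle$ for some $i\in\{1,\ldots,d\}$. If $d=1$ then $c\in K[x_1]$, and evaluating $c$ on a diagonal matrix with $n$ distinct entries (which exists since $K$ has characteristic $0$ and is hence infinite) would force the scalar polynomial $c$ to take the same value at infinitely many points, so $c$ would be constant, contradicting $m\geq 1$. If $d\geq 2$ and $[c,x_i]=0$ for every $i$, then $c$ lies in the center of the free algebra $K\langle X_d\rangle$; but the centralizer of $x_1$ in $K\langle X_d\rangle$ is $K[x_1]$ and similarly for $x_2$, and $K[x_1]\cap K[x_2]=K$, so $c\in K$, again contradicting $m\geq 1$. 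Fix such an $i$; then $[c,x_i]$ is a nonzero element of $K\langle X_d\rangle^{(m+1)}\cap T(M_n(K))$.

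By Schur--Weyl duality the ambient space is semisimple as a $GL_d(K)$-module,
\[
K\langle X_d\rangle^{(m+1)}=\bigoplus_{\lambda\vdash m+1,\ \ell(\lambda)\leq d}d_\lambda W_d(\lambda),
\]
so the nonzero $GL_d(K)$-submodule $K\langle X_d\rangle^{(m+1)}\cap T(M_n(K))$ contains at least one irreducible summand $W_d(\lambda)$ with $\lambda\vdash m+1$ and $\ell(\lambda)\leq d$. To finish, I appeal to the Berele--Drensky theorem recalled in Section~2: for $\ell(\lambda)\leq d$, the multiplicity of $W_d(\lambda)$ in $F_d(M_n(K))^{(m+1)}$ equals the multiplicity $m_\lambda$ of $M(\lambda)$ in $\chi_{m+1}(M_n(K))$. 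Since the full multiplicities of $W_d(\lambda)$ in $K\langle X_d\rangle^{(m+1)}$ and of $M(\lambda)$ in $P_{m+1}$ are both equal to $d_\lambda=\dim M(\lambda)$, subtracting gives that $M(\lambda)$ occurs in $P_{m+1}\cap T(M_n(K))$ with the same (positive) multiplicity as $W_d(\lambda)$ occurs in $K\langle X_d\rangle^{(m+1)}\cap T(M_n(K))$.

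The delicate step is the first one: one must arrange that the identity $[c,x_i]$ lives in $K\langle X_d\rangle$ and not in $K\langle X_{d+1}\rangle$, since only this keeps $\ell(\lambda)\leq d$ and makes the Berele--Drensky translation applicable. Working instead with a fresh variable $x_{d+1}$ would give nonvanishing for free but would yield a partition with $\ell(\lambda)\leq d+1$, which is weaker than what the lemma claims. Once the commutator is kept in $d$ variables, the remainder of the argument is a routine application of the $GL_d(K)/S_{m+1}$ dictionary.
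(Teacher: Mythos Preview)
Your proof is correct and follows essentially the same approach as the paper's: both arguments produce a nonzero identity $[c,x_i]$ in $d$ variables and then invoke the correspondence between $d$-variable identities and $S_{m+1}$-submodules $M(\lambda)$ with $\ell(\lambda)\le d$. The paper is terser---it simply asserts that $M_n(K)$ has no one-variable central polynomials, takes $[c,x_1]$, and says its linearization generates an $S_{m+1}$-module decomposing into $M(\lambda_1,\ldots,\lambda_d)$'s---whereas you spell out the nonvanishing of $[c,x_i]$ via centralizers and make the passage to $P_{m+1}\cap T(M_n(K))$ explicit through the $GL_d(K)$-module structure and the Berele--Drensky theorem; but these are two phrasings of the same mechanism.
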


\begin{proof}
The algebra $M_n(K)$, $n>1$, does not have central polynomials in one variable.
If $c(x_1,\ldots,x_d)$ is a central polynomial of degree $m$, then the commutator $[c(x_1,\ldots,x_d),x_1]$ is a nontrivial polynomial identity in $d$ variables.
The linearization of $[c(x_1,\ldots,x_d),x_1]$ generates an $S_{m+1}$-submodule of $P_{m+1}$ which is a direct sum of some $M(\lambda_1,\ldots,\lambda_d)$.
\end{proof}

\begin{corollary}\label{central polynomials of degree <10}
The matrix algebra $M_4(K)$ does not have central polynomials in two variables of degree $\leq 9$
and polynomial identities in two variables of degree $\leq 10$.
\end{corollary}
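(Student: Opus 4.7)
The plan is to reduce the central polynomial statement to the identity statement via Lemma~\ref{relation with S_m-characters}, and then to read off the identity statement from the explicit cocharacter data available in degrees $8$, $9$, and $10$.

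For the identity half I would proceed degree by degree. The Amitsur--Levitzki theorem immediately rules out any identity of $M_4(K)$ of degree $\leq 7$. In degree $8$, the classical fact that $P_{2n}\cap T(M_n(K))$ is the one-dimensional sign module spanned by $s_{2n}$ shows that the only $S_8$-irreducible occurring in $P_8\cap T(M_4(K))$ is $M(1^8)$, whose partition has eight parts. In degree $9$, Theorem~\ref{Olsson and Regev} (with $n=4$) lists every $S_9$-irreducible appearing in $P_9\cap T(M_4(K))$, and each of the four partitions has at least seven parts. In degree $10$, Theorem~\ref{PI of degree n+2} (with $n=4$) tells us that every multilinear identity is a consequence of $s_8$, so the cocharacter of $P_{10}\cap T(M_4(K))$ coincides with the one given in Theorem~\ref{Consequences of S_m} for $m=8$; every partition visible there has at least six parts.

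Because a polynomial in at most two variables corresponds, under the duality between $S_m$-cocharacters and $GL_d$-characters recalled in Section~2, to an isotypic component indexed by a partition with at most two parts, and because none of the partitions appearing in the cocharacters of degrees $8$, $9$, $10$ has this form, the algebra $M_4(K)$ admits no polynomial identity in two variables of degree $\leq 10$.

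Finally I would invoke Lemma~\ref{relation with S_m-characters}: a central polynomial $c(x_1,x_2)$ of degree $m\leq 9$ would make $[c,x_1]$ a nontrivial identity in two variables of degree $m+1\leq 10$, whose linearization generates an $S_{m+1}$-submodule built from irreducibles $M(\lambda_1,\lambda_2)$ inside $P_{m+1}\cap T(M_4(K))$, contradicting the previous paragraph. I expect no genuine obstacle; the only point requiring care is the degree-$8$ case, where one relies on the classical but not explicitly quoted fact that in minimal degree $2n$ the space $P_{2n}\cap T(M_n(K))$ collapses to the sign module generated by $s_{2n}$, which has $2n$ parts and hence supports no two-variable specialization.
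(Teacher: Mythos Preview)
Your proposal is correct and follows essentially the same argument as the paper: use Amitsur--Levitzki for degree $\leq 8$, Theorem~\ref{Olsson and Regev} for degree $9$, and Theorems~\ref{PI of degree n+2} and~\ref{Consequences of S_m} for degree $10$ to see that no two-part partition occurs in $P_m\cap T(M_4(K))$ for $m\leq 10$, then apply Lemma~\ref{relation with S_m-characters} to rule out two-variable central polynomials of degree $\leq 9$. The only difference is expository---the paper writes out the two cocharacters explicitly rather than just noting the minimum number of parts.
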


\begin{proof}
By the Amitsur-Levitzky theorem the standard polynomial $s_8$ is the only polynomial identity for $M_4(K)$ of degree $\leq 8$.
It generates the $S_8$-module $M(1^8)$. By Theorems \ref{Olsson and Regev}, \ref{Consequences of S_m} and \ref{PI of degree n+2}
the $S_m$-module of the polynomial identities for $M_4(K)$ of degree $m=9,10$ are
\[
\chi_{S_9}(P_{9}\cap T(M_4(K)))=\chi(3,1^6)+\chi(2^2,1^5)+3\chi(2,1^7)+\chi(1^9),
\]
\[
\chi_{S_{10}}(P_{m+2}\cap T(M_4(K)))=
2\chi(4,2,1^4)+4\chi(4,1^6)+2\chi(3,2^2,1^3)+
\]
\[
+8\chi(3,2,1^5)+9\chi(3,1^7)+4\chi(2^3,1^4)
+8\chi(2^2,1^6)+5\chi(2,1^8)+\chi(1^{10}).
\]
This gives the proof for the polynomial identities in two variables of degree $\leq 10$.
Since $P_m\cap T(M_4(K))$ does not contain $S_m$-submodules $M(\lambda_1,\lambda_2)$ for $m\leq 10$, Lemma \ref{relation with S_m-characters} gives that
$M_4(K)$ does not have central polynomials in two variables of degree $\leq 9$.
\end{proof}

Let $UT_n(K)$ be the algebra of $n\times n$ upper triangular matrices.
Yu. Maltsev \cite{Mal} proved that the T-ideal $T(UT_n(K))$ is generated by the polynomial identity
\[
[x_1,x_2]\cdots[x_{2n-1},x_{2n}]=0.
\]
This result allows to reduce the search for central polynomials to polynomials of a special form.

\begin{lemma}\cite[Proposition 4.2 (i)]{DrKas1}\label{reduction to proper}
If $c(x_1,\ldots,x_d)$ is a central polynomial for $M_n(K)$ of minimal degree in $K\langle X_d\rangle$, then
$c(x_1,\ldots,x_d)$ is a proper polynomial in $T(UT_n(K))$.
\end{lemma}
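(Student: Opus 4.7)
The plan is to proceed in three stages. First, reduce $c$ to a multihomogeneous central polynomial of the same minimal degree $m$; this is immediate in characteristic zero because the multihomogeneous components of a central polynomial are each central or an identity, and minimality forces at least one component to remain central of degree $m$. Second, replace $c$ by an equivalent proper polynomial modulo $T(M_n(K))$, exploiting the scalar central element $I_n\in M_n(K)$. Third, observe that any proper central polynomial for $M_n(K)$ must vanish on $UT_n(K)$.

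For the second stage, fix $i\in\{1,\ldots,d\}$ and $\alpha\in K$ and set
\[
g_\alpha(x_1,\ldots,x_d)=c(x_1,\ldots,x_{i-1},x_i+\alpha,x_{i+1},\ldots,x_d)-c(x_1,\ldots,x_d).
\]
Since $\alpha I_n$ lies in the center of $M_n(K)$, the polynomial $g_\alpha$ takes central values on $M_n(K)$; moreover the leading $x_i$-component cancels, so $\deg g_\alpha<m$, and minimality forces $g_\alpha\in T(M_n(K))$. Writing $g_\alpha=\sum_{k\geq 1}\alpha^k h_k$ with $h_k$ of $x_i$-degree $\lambda_i-k$, and using that $T(M_n(K))$ is multigraded in characteristic zero, every $h_k$ lies in $T(M_n(K))$. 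Iterating over $i=1,\ldots,d$, the image $\bar c$ of $c$ in the relatively free algebra $F_d(M_n(K))$ is fixed by every substitution automorphism $\tau_{i,\alpha}\colon x_i\mapsto x_i+\alpha$, $\alpha\in K$. I now invoke the standard Poincar\'e--Birkhoff--Witt-type decomposition for relatively free algebras of unitary varieties: every class in $F_d(M_n(K))$ is uniquely a sum $\sum_{\vec a}x_1^{a_1}\cdots x_d^{a_d}\bar p_{\vec a}$ with each $\bar p_{\vec a}$ the class of a proper polynomial (see, e.g., \cite[Ch.~12]{Dr3}). Since proper polynomials in $K\langle X_d\rangle$ are literally invariant under $x_i\mapsto x_i+\alpha$, applying the automorphisms $\tau_{i,\alpha}$ to this decomposition of $\bar c$ and using the infiniteness of $K$ (a Vandermonde separation in powers of $\alpha$) forces $\bar p_{\vec a}=0$ whenever some $a_i\geq 1$. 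Hence $\bar c=\bar p_{\vec 0}$, so $c\equiv c'\pmod{T(M_n(K))}$ for some proper polynomial $c'$; by minimality $\deg c'=m$, and I may replace $c$ by $c'$.

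For the third stage, $c$ is now proper: a $K$-linear combination of products of commutators. For $a_1,\ldots,a_d\in UT_n(K)$ each commutator $[a,b]$ is strictly upper triangular, since a direct computation shows the diagonal entries of $[a,b]$ vanish, and products of strictly upper triangular matrices remain strictly upper triangular. Thus $c(a_1,\ldots,a_d)$ is strictly upper triangular. On the other hand, centrality of $c$ for $M_n(K)$ forces this value to be $\mu I_n$ for some $\mu\in K$, and $\mu I_n$ is strictly upper triangular only when $\mu=0$; so $c\in T(UT_n(K))$. The main obstacle is the PBW step in stage two -- transferring translation-invariance modulo $T(M_n(K))$ into actual properness modulo $T(M_n(K))$ -- which hinges on the uniqueness of the PBW-style basis for $F_d(M_n(K))$, a well-established but nontrivial fact from PI-theory; the remaining steps are essentially formal.
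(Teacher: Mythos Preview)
Your proof is correct and follows essentially the same two-step route as the paper: first use the substitution $x_i\mapsto x_i+\alpha$ together with the PBW decomposition $c=\sum x_1^{a_1}\cdots x_d^{a_d}\,c_a$ (with $c_a$ proper) and minimality of the degree to replace $c$ by a proper central polynomial, then observe that a proper polynomial evaluated on $UT_n(K)$ is strictly upper triangular while a central value must be scalar, forcing it to vanish. Your third stage is in fact slightly more direct than the paper's version, which detours through the grading of $c$ by the number of commutator factors, but the underlying idea is identical.
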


\begin{proof}
It is well known, see e.g. \cite[Proposition 4.3.3]{Dr3} that every polynomial $c(x_1,\ldots,x_d)$ in $K\langle X_d\rangle$ can be written in the form
\[
c(x_1,\ldots,x_d)=\sum x_1^{a_1}\cdots x_d^{a_d}c_a(x_1,\ldots,x_d),
\]
where $c_a(x_1,\ldots,x_d)$ is a linear combination of products of commutators, i.e. proper polynomials. Since we are searching central polynomials, we may assume that
all $c_a(x_1,\ldots,x_d)$ are not polynomial identities for $M_n(K)$. The polynomial $c(1+x_1,\ldots,1+x_d)$ is also central for $M_n(K)$ and
\[
c(1+x_1,\ldots,1+x_d)=\sum (1+x_1)^{a_1}\cdots (1+x_d)^{a_d}c_a(x_1,\ldots,x_d).
\]
The multihomogeneous components of $c(1+x_1,\ldots,1+x_d)$ are also central polynomials or polynomial identities for $M_n(K)$.
Hence this holds also for the proper components $c_a(x_1,\ldots,x_d)$ of minimal degree. Since they are not polynomial identities, they are central polynomials.

We can write $c_a(x_1,\ldots,x_d)$ as a linear combination of products of left normed commutators
$u=[x_{i_1},x_{i_2},\ldots,x_{i_k}]$, $i_1>i_2\leq\cdots\leq i_k$:
\[
c_a(x_1,\ldots,x_d)=\sum_{m\geq 1}c_a^{(m)},\text{ where }
c_a^{(m)}=\sum\alpha^{(m)}_ju^{(m)}_{j_1}\cdots u^{(m)}_{j_m}
\]
is the component with $m$ commutators in the products.The sum
\[
c^{(m)}(x_1,\ldots,x_d)=\sum_{m=1}^{n-1}c_a^{(m)}(x_1,\ldots,x_d)
\]
is not a polynomial identity for the algebra $UT_n(K)$. Hence there exist $n\times n$ upper triangular matrices $r_1,\ldots,r_d$ such that
$c^{(m)}(r_1,\ldots,r_d)\not=0$ in $UT_n(K)\subset M_n(K)$. Since $c^{(m)}(x_1,\ldots,x_d)$ is a central polynomial for $M_n(K)$, we obtain that
$c^{(m)}(r_1,\ldots,r_d)$ is a nonzero scalar matrix. But this is impossible because the commutator of two upper triangular matrices is with zero diagonal
and the same holds for $c^{(m)}(r_1,\ldots,r_d)$.
Hence each $c_a(x_1,\ldots,x_d)$ is a linear combination of products of $\geq n$ commutators and $c_a(x_1,\ldots,x_d)\in T(UT_n(K))$.
\end{proof}

\begin{remark}\label{reduction to proper PI}
For unitary algebras all polynomial identities follow from the proper ones. Since
$T(M_n(K))\subset T(UT_n(K))$, this immediately implies that if
$f(x_1,\ldots,x_d)$ is a polynomial identity for $M_n(K)$ of minimal degree in $K\langle X_d\rangle$, then
$f(x_1,\ldots,x_d)$ is a proper polynomial in $T(UT_n(K))$.
\end{remark}

Following Bergman \cite{Berg} we define a linear map
\[
\alpha:K[t_0,t_1,\ldots,t_n]\times K\langle x_1,\ldots,x_n\rangle^{(n)}\to K\langle x_1,\ldots,x_n,y\rangle
\]
by
\[
\alpha(t_0^{a_0}t_1^{a_1}\cdots t_{n-1}^{a_{n-1}}t_n^{a_n},x_{i_1}\cdots x_{i_n})=y^{a_0}x_{i_1}y^{a_1}\cdots y^{a_{n-1}}x_{i_n}y^{a_n}.
\]
Then every polynomial in $K\langle x_1,\ldots,x_n,y\rangle$ which is homogeneous of degree $n$ in $x_1,\ldots,x_n$ can be written in the form
\[
f(x_1,\ldots,x_n,y)=\sum_i\alpha(h_i(t_0,t_1,\ldots,t_n),x_{i_1}\cdots x_{i_n}),
\]
$h_i(t_0,t_1,\ldots,t_n)\in K[t_0,t_1,\ldots,t_n]$.
The following theorem of Bergman \cite{Berg} describes the polynomial identities of $M_n(K)$ of this form which are multilinear in $x_1,\ldots,x_n$
and the central polynomials $c(x_1,\ldots,x_n,y)$ of $M_n(K)$ which are of degree $n$ in $x_1,\ldots,x_n$.

\begin{theorem}\cite[Part II]{Berg}\label{central polynomials of degree (n,k)}
{\rm (i)} The polynomial
\[
f(x_1,\ldots,x_n,y)=\sum_{\sigma\in S_n}\alpha(h_{\sigma}(t_0,t_1,\ldots,t_n),x_{\sigma(1)}\cdots x_{\sigma(n)})
\]
is a polynomial identity for $M_n(K)$ if and only if $h_{\sigma}(t_0,t_1,\ldots,t_n)$ is divisible by all $t_p-t_q$, $0\leq p<q\leq n$, for all $\sigma\in S_n$.

{\rm(ii)} The central polynomials $c(x_1,\ldots,x_n,y)$ which are multilinear in $x_1,\ldots,x_n$ are linear combinations of
\[
\sum_{i=1}^n\alpha(h_{\sigma}(t_0,t_1,\ldots,t_n),x_{\sigma(i)},\ldots,x_{\sigma(n)},x_{\sigma(1)}\cdots x_{\sigma(i-1)}),
\]
where $h_{\sigma}(t_0,t_1,\ldots,t_n)$, $\sigma\in S_n$, is divisible by all
\[
t_0-t_p, t_p-t_n,1\leq p\leq n-1,\,(t_p-t_q)^2, 1\leq p<\leq q\leq n-1,
\]
The polynomial $h(t_0,t_1,\ldots,t_n)$ for the central polynomial
\[
c(x,y)=\alpha(h(t_0,t_1,\ldots,t_n),x,\ldots,x)
\]
satisfies the same divisibility conditions.
In particular, $M_n(K)$ does not have central polynomials $c(x_1,\ldots,x_n,y)$ of degree $n$ in $x_1,\ldots,x_n$ and of degree $m<n(n-1)$ in $y$.
\end{theorem}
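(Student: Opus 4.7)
The plan is to reduce both parts to evaluations in which $y$ is diagonal, $y = \mathrm{diag}(\mu_1, \ldots, \mu_n)$, and each $x_i$ is a matrix unit $E_{p_i, q_i}$. Multilinearity in the $x_i$'s together with Zariski density of diagonalizable matrices (combined with conjugation equivariance $P f(y, x_1, \ldots, x_n) P^{-1} = f(PyP^{-1}, Px_1 P^{-1}, \ldots, Px_n P^{-1})$) reduces the PI and central-polynomial conditions on $M_n(K)$ to polynomial conditions on the coefficients $h_\sigma$. The basic computation is
\[
y^{a_0} E_{p_1, q_1} y^{a_1} \cdots y^{a_{n-1}} E_{p_n, q_n} y^{a_n} = \biggl(\prod_{j=1}^{n-1}\delta_{q_j, p_{j+1}}\biggr) \mu_{p_1}^{a_0} \mu_{q_1}^{a_1} \cdots \mu_{q_n}^{a_n}\, E_{p_1, q_n},
\]
which translates the evaluation of $\alpha(h_\sigma, x_{\sigma(1)} \cdots x_{\sigma(n)})$ into $h_\sigma(\mu_{j_0}, \mu_{j_1}, \ldots, \mu_{j_n})\, E_{j_0, j_n}$ whenever the chain $(j_0, j_1, \ldots, j_n) = (p_{\sigma(1)}, q_{\sigma(1)}, \ldots, q_{\sigma(n)})$ is internally compatible, and zero otherwise.

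For part (i), sufficiency is a pigeonhole argument: any chain of length $n+1$ drawn from $\{1, \ldots, n\}$ has $j_p = j_q$ for some $p < q$, so the hypothesized divisibility $(t_p - t_q) \mid h_\sigma$ makes every evaluation vanish. For necessity, I would isolate a single $\sigma \in S_n$ by choosing matrix units from a cyclic pattern (for instance $x_i = E_{c_i, c_{i+1}}$ for a suitable index sequence $c$) so that only the chain through that $\sigma$ passes the chain-condition filter; the residual freedom in the eigenvalue parameters $\mu_j$ then forces $h_\sigma(t_0, \ldots, t_n)$ to vanish under every specialization that identifies $t_p$ with $t_q$, which is exactly divisibility by $t_p - t_q$.

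Part (ii) is more delicate because the evaluation must be a scalar matrix rather than zero. Off-diagonal contributions (chains with $j_0 \neq j_n$) must vanish, and the isolation trick of (i) then yields first-order divisibility of $h_\sigma$ by $t_0 - t_p$ and $t_p - t_n$ for $1 \leq p \leq n-1$. The cyclic-sum form in the theorem reflects the diagonal case: closed chains ($j_0 = j_n = i$) land on $E_{i,i}$, and the requirement that the coefficient at $E_{i,i}$ be the same for every $i$ is precisely the invariance of the generator under the cyclic rotation $x_{\sigma(1)} \cdots x_{\sigma(n)} \mapsto x_{\sigma(i)} \cdots x_{\sigma(i-1)}$, which forces $c$ to lie in the span of those cyclic sums. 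The main obstacle I expect is the higher divisibility $(t_p - t_q)^2$ for intermediate pairs $1 \leq p < q \leq n-1$: this requires a second-order analysis, since once the first-order vanishing has been imposed, one must still show that when two intermediate eigenvalues $\mu_{j_p}, \mu_{j_q}$ coincide, two cyclic rotations of the same chain contribute identically and their combined cancellation across different diagonal entries $E_{i,i}$ demands $h_\sigma$ vanish to order two at $t_p = t_q$. The final degree-in-$y$ bound is immediate: the mandatory divisors of $h$ have total degree $(n-1) + (n-1) + 2\binom{n-1}{2} = n(n-1)$, so $h$, and hence $c(x, y) = \alpha(h, x, \ldots, x)$, has degree in $y$ at least $n(n-1)$.
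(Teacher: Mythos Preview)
The paper does not prove this theorem; it is quoted from Bergman's preprint \cite{Berg} and used as a black box. So there is no ``paper's own proof'' to compare against, and your proposal has to be judged on its merits and against what Bergman actually does.

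Your overall strategy---diagonalize $y$ by Zariski density and conjugation invariance, substitute matrix units for the $x_i$, and read off conditions on the $h_\sigma$ from the explicit evaluation---is exactly the right one and is the approach Bergman takes. The sufficiency in (i) via pigeonhole and the degree count at the end of (ii) are correct as written.

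There are two places where your sketch is thinner than it needs to be. First, in the necessity direction of (i) you assert that a suitable choice $x_i=E_{c_i,c_{i+1}}$ isolates a \emph{single} $\sigma$. That is not always true: for instance, if the chain $(j_0,\ldots,j_n)$ closes up (i.e.\ $j_0=j_n$, which is exactly the case needed for divisibility by $t_0-t_n$), then all cyclic shifts of $\sigma$ give compatible chains. What saves you is that these different $\sigma$'s contribute to \emph{different} matrix entries $E_{j_0^\sigma,j_n^\sigma}$, so the vanishing of $f$ still forces each contribution to vanish separately. You should make this explicit rather than claiming isolation to one permutation. More generally, the right picture is that the labeled edges $x_i:j_{i-1}\to j_i$ form a directed multigraph, compatible $\sigma$'s correspond to Eulerian paths, and one must check that distinct Eulerian paths either do not exist or land on distinct matrix positions.

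Second, for the squared divisibility $(t_p-t_q)^2$ in (ii) you correctly flag this as the main obstacle but stop short of an argument. The mechanism is this: once the off-diagonal vanishing gives first-order divisibility, a chain with an interior repeat $j_p=j_q$ admits two Eulerian \emph{circuits} differing by which of the two loops at the repeated vertex is traversed first; these contribute to the \emph{same} diagonal entry $E_{ii}$ but with the arguments of $h$ permuted by the transposition of the $p$-th and $q$-th blocks. Centrality forces the sum to be independent of $i$, and playing off different base points $i$ against this transposition symmetry yields the second-order vanishing. This is genuinely the delicate step and your proposal would need to carry it out, not just anticipate it.
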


\section{Central polynomials in two variables for $M_4(K)$}\label{section 4}

\subsection{Preliminaries}\label{preliminaries for two variables}
To show that $M_4(K)$ does not have central polynomials in two variables of degree $\leq 12$ we shall use the following:
\begin{itemize}
\item[(i)] By Corollary \ref{central polynomials of degree <10} it is sufficient to consider polynomials of degree 10, 11 and 12.
\item[(ii)] By Lemma \ref{reduction to proper} it is sufficient to search central polynomials which are highest weight vectors
$w_{\lambda}(x,y)$, $\lambda=(\lambda_1,\lambda_2)$, which can be expressed as linear combinations of products of $\geq 4$ commutators.
\item[(iii)] By Theorem \ref{central polynomials of degree (n,k)} (ii) it is sufficient to consider only the cases $\lambda=(\lambda_1,\lambda_2)$ with $\lambda_2\geq 5$.
\end{itemize}

Concerning polynomial identities in two variables of degree $\leq 12$, we have the following arguments. By Theorem \ref{central polynomials of degree (n,k)} (i)
the polynomial identities for $M_n(K)$ in a $GL_d(K)$-submodule $W(\lambda_1,\lambda_2,\ldots,\lambda_d)$ of $K\langle X_d\rangle$ for $\lambda_2+\cdots+\lambda_d=n$
are of degree $\displaystyle \geq n+\binom{n+1}{2}=\frac{n(n+3)}{2}$. For $n=4$ this implies that for $(\lambda_1,4)$ the algebra $M_4(K)$ does not have polynomial identities of degree $<14$.
Theorems \ref{Olsson and Regev}, \ref{Consequences of S_m} and \ref{PI of degree n+2} handle the case of polynomial identities of degree $\leq 10$.
Hence, as in the case of central polynomials it is sufficient to study the cases $\lambda=(6,5),(7,5)$ and (6,6).

Let $\text{\rm Com}_m$ be the vector subspace of $K\langle x,y\rangle$ with basis all proper commutators $[y,x,\ldots,x,y,\ldots,y]$ of length $m$.
As we stated in the Preliminaries, the vector space spanned by products $\text{\rm Com}_{m_1}\cdots \text{\rm Com}_{m_k}$
of $k$ commutators of length $m_1,\ldots,m_k$ modulo the ideal generated by products of $k+1$ commutators
is isomorphic to the tensor product of $GL_d(K)$-modules
\[
W(m_1-1,1)\otimes_K\cdots\otimes_KW(m_k-1,1).
\]
The tensor product $W=W(p_1+p_2,p_2)\otimes_KW(m+1,1)$ decomposes as
\[
W=W(p_1+p_2+m+1,p_2+1)\oplus\cdots\oplus W(p_1+p_2+1,p_2+m+1)
\]
if $p_1\geq m$ and as
\[
W=W(p_1+p_2+m+1,p_2+1)\oplus\cdots\oplus W(p_2+m+1,p_2+p_1+1)
\]
if $p_1<m$.

We shall need the following decompositions obtained by direct computations:

\noindent (i) For products of 4 commutators of degree 10:
\[
W(3,1)\otimes_KW(1,1)^{\otimes 3}=W(6,4),
\]
\[
W(2,1)^{\otimes 2}\otimes_KW(1,1)^{\otimes 2}=W(6,4)\oplus W(5,5);
\]
(ii) For products of 5 commutators of degree 10:
\[
W(1,1)^{\otimes 5}=W(5,5);
\]
(iii) For products of 4 commutators of degree 11:
\[
W(4,1)\otimes_KW(1,1)^{\otimes 3}=W(7,4),
\]
\[
W(3,1)\otimes_KW(2,1)\otimes_KW(1,1)^{\otimes 2}=W(7,4)\oplus W(6,5),
\]
\[
W(2,1)^{\otimes 3}\otimes_KW(1,1)=W(7,4)\oplus 2W(6,5);
\]
(iv) For products of 5 commutators of degree 11:
\[
W(2,1)\otimes_KW(1,1)^{\otimes 4}=W(6,5);
\]
(v) For products of 4 commutators of degree 12:
\[
W(5,1)\otimes_K W(1,1)^{\otimes 3}=W(8,4),
\]
\[
W(4,1)\otimes_KW(2,1)\otimes_KW(1,1)^{\otimes 2}=W(8,4)\oplus W(7,5),
\]
\[
W(3,1)^{\otimes 2}\otimes_KW(1,1)^{\otimes 2}=W(8,4)\oplus W(7,5)\oplus W(6,6),
\]
\[
W(3,1)\otimes_KW(2,1)^{\otimes 2}\otimes_KW(1,1)=W(8,4)\oplus 2W(7,5)\oplus W(6,6),
\]
\[
W(2,1)^{\otimes 4}=W(8,4)\oplus 3W(7,5)\oplus 2W(6,6);
\]
(vi) For products of 5 commutators of degree 12:
\[
W(3,1)\otimes_K W(1,1)^{\otimes 4}=W(7,5),
\]
\[
W(2,1)^{\otimes 2}\otimes_K W(1,1)^{\otimes 3}=W(7,5)\oplus W(6,6);
\]
(vii) For products of 6 commutators of degree 12:
\[
W(1,1)^{\otimes 6}=W(6,6).
\]

\subsection{Computations for degree 10}
In all the computations for degree 10, 11 and 12 shall need the highest weight vectors of
\[
W(\lambda_1,\lambda_2)\subset \text{\rm Com}_{m_1}\cdots \text{\rm Com}_{m_k}, \lambda_2\geq 5, k\geq 4.
\]

\begin{lemma}\label{HWV of degree 10}
The following polynomials are highest weight vectors for $\lambda=(5,5)$:
\[
w_1(x,y)=([y,x,x][y,x,y]-[y,x,y][y,x,x])[y,x]^2\in \text{\rm Com}_3^2\text{\rm Com}_2^2,
\]
\[
w_2(x,y)=[y,x]^5\in \text{\rm Com}_2^5.
\]
\end{lemma}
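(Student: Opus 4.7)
The plan is to apply directly the characterization of highest weight vectors recalled in the Preliminaries: for $d=2$ a multihomogeneous polynomial of multidegree $(5,5)$ is a highest weight vector for $W(5,5)$ if and only if it is nonzero and invariant under the substitution $y\mapsto y+x$. So the verification splits into three steps: (a) multihomogeneity, (b) invariance, (c) nonvanishing.

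Step (a) is pure bookkeeping. In $w_2=[y,x]^5$ each commutator $[y,x]$ contributes degree $1$ in both $x$ and $y$, giving total multidegree $(5,5)$. In $w_1$ the factor $[y,x,x][y,x,y]-[y,x,y][y,x,x]$ has multidegree $(2,1)+(1,2)=(3,3)$ and the tail $[y,x]^2$ has multidegree $(2,2)$, producing the required $(5,5)$.

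Step (b) is the core of the argument. The key elementary identity is
\[
[y+x,x]=[y,x]+[x,x]=[y,x],
\]
which shows that under $y\mapsto y+x$ the commutator $[y,x]$ is fixed. Iterating, $[y,x,x]$ is also fixed. The only length-$3$ commutator that changes is
\[
[y+x,x,y+x]=[[y,x],y+x]=[y,x,y]+[y,x,x].
\]
Invariance of $w_2=[y,x]^5$ is therefore immediate. For $w_1$, set $a=[y,x,x]$ and $b=[y,x,y]$; under $y\mapsto y+x$ we have $a\mapsto a$, $b\mapsto b+a$, and the tail $[y,x]^2$ is fixed, so
\[
ab-ba\;\longmapsto\;a(b+a)-(b+a)a=ab+a^{2}-ba-a^{2}=ab-ba,
\]
and hence $w_1$ is invariant as well.

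Step (c), nonvanishing, is the only place one has to be a little careful but is not genuinely obstructive. Since $K\langle x,y\rangle$ is a domain and $[y,x]\ne 0$, we have $w_2=[y,x]^5\ne 0$. For $w_1$ it suffices to observe that $a=[y,x,x]$ and $b=[y,x,y]$ are linearly independent (they have different $x$-degrees), so in the free algebra $ab\ne ba$; multiplying the nonzero element $ab-ba$ by $[y,x]^2\ne 0$ in the domain $K\langle x,y\rangle$ keeps the product nonzero. One could alternatively exhibit a specific monomial whose coefficient in $w_1$ is nonzero; either way the check is routine. There is no serious obstacle in the proof — it is a direct computation hinging on the single identity $[y+x,x]=[y,x]$.
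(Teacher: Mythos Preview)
Your proof is correct and follows exactly the approach of the paper, which simply states that both polynomials are bihomogeneous of degree $(5,5)$ and that the invariance $w_i(x,y+x)=w_i(x,y)$ can be verified by direct computation; you have merely written out those computations explicitly. One small quibble: linear independence of $a=[y,x,x]$ and $b=[y,x,y]$ does not by itself force $ab\ne ba$ (think of $x$ and $x^2$), but your alternative of exhibiting a monomial with nonzero coefficient is perfectly fine, and in any case the paper does not even address nonvanishing.
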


\begin{proof}
Both polynomials are bihomogeneous of degree $(5,5)$. They are highest weight vectors if and only if
$w_i(x,y+x)=w_i(x,y)$, $i=1,2$, which can be verified by direct computations.
\end{proof}

\begin{proposition}\label{no central polynomials of degree 10}
The algebra $M_4(K)$ does not have central polynomials $c(x,y)$ of degree $10$.
\end{proposition}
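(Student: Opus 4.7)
The plan is to reduce the problem to a two-dimensional linear algebra question and settle it by a matrix substitution. By items (i)--(iii) of Section~\ref{preliminaries for two variables}, any central polynomial of degree $10$ may, without loss of generality, be taken to be a highest weight vector $w_{\lambda}(x,y)$ with $\lambda=(\lambda_1,\lambda_2)\vdash 10$, $\lambda_2\geq 5$, expressible as a linear combination of products of at least four commutators. The constraint $\lambda_1\geq\lambda_2\geq 5$ and $\lambda_1+\lambda_2=10$ forces $\lambda=(5,5)$ as the unique possibility.

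Next I determine the dimension of the corresponding space of highest weight vectors. From the decompositions in Section~\ref{preliminaries for two variables}, products of exactly four commutators of degree $10$ contribute $W(5,5)$ with multiplicity one (via $W(2,1)^{\otimes 2}\otimes W(1,1)^{\otimes 2}$; the other four-commutator decomposition yields only $W(6,4)$), and products of exactly five commutators contribute $W(5,5)$ with multiplicity one (via $W(1,1)^{\otimes 5}$). Six or more commutators are impossible in degree $10$ since each commutator has length at least two. Hence the candidate highest weight vector space is two-dimensional, and Lemma~\ref{HWV of degree 10} exhibits the explicit basis $\{w_1,w_2\}$.

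Every candidate is therefore of the form $c(x,y)=\alpha w_1(x,y)+\beta w_2(x,y)$ with $(\alpha,\beta)\neq(0,0)$. To rule out that any such $c$ is central for $M_4(K)$, I would pick specific sparse test matrices $x_0,y_0\in M_4(K)$, compute the two $4\times 4$ matrices $w_1(x_0,y_0)$ and $w_2(x_0,y_0)$ explicitly, and extract two entries of $\alpha w_1(x_0,y_0)+\beta w_2(x_0,y_0)$ whose simultaneous vanishing forms a $2\times 2$ homogeneous linear system in $(\alpha,\beta)$ with nonzero determinant. Such entries may be chosen as two distinct off-diagonal positions, or one off-diagonal position together with a difference of two diagonal entries; in either case this precludes $\alpha w_1(x_0,y_0)+\beta w_2(x_0,y_0)$ from being a scalar matrix unless $\alpha=\beta=0$, which means no nontrivial combination $c$ takes only central values.

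The main obstacle is computational rather than conceptual: a degree-$10$ noncommutative polynomial expanded on $4\times 4$ matrices yields unwieldy expressions, and one must choose $x_0,y_0$ carefully so that the resulting linear system in $(\alpha,\beta)$ is nondegenerate. In practice this is a routine computer algebra computation; if a first substitution fails to separate the contributions of $w_1$ and $w_2$, one perturbs the entries of $x_0,y_0$ and retries. Once a single favorable substitution is exhibited, the proposition follows immediately.
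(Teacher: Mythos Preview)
There is a genuine gap in your count of highest weight vectors. You claim the candidate space for $\lambda=(5,5)$ is two-dimensional, with basis $\{w_1,w_2\}$ from Lemma~\ref{HWV of degree 10}, but that lemma only exhibits one representative highest weight vector for each \emph{unordered} multiset of commutator lengths. The decomposition
\[
W(2,1)^{\otimes 2}\otimes_K W(1,1)^{\otimes 2}\cong W(6,4)\oplus W(5,5)
\]
describes the $GL_2(K)$-structure of a \emph{single ordered} product $\text{Com}_{m_1}\text{Com}_{m_2}\text{Com}_{m_3}\text{Com}_{m_4}$ (modulo products of more commutators). In the free algebra the six distinct orderings
\[
(3,3,2,2),\ (3,2,3,2),\ (3,2,2,3),\ (2,3,3,2),\ (2,3,2,3),\ (2,2,3,3)
\]
yield six different subspaces, each contributing its own copy of $W(5,5)$ and hence its own highest weight vector
\[
w^{(i)}(x,y)=[y,x]^p\bigl([y,x,x][y,x]^q[y,x,y]-[y,x,y][y,x]^q[y,x,x]\bigr)[y,x]^r,
\]
for the six triples $(p,q,r)$ with $p+q+r=2$. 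Together with $w^{(7)}=[y,x]^5$ from $\text{Com}_2^5$, the space of candidates is spanned by seven polynomials, not two. (This is exactly the pattern the paper uses in higher degrees: for $\lambda=(6,5)$ one obtains $12+8+5=25$ highest weight vectors, etc.)

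Consequently a $2\times 2$ determinant check cannot rule out central polynomials; you must consider
\[
w_{(5,5)}(x,y)=\sum_{i=1}^{7}\xi_i\,w^{(i)}(x,y)
\]
and show that the resulting homogeneous linear system in $\xi_1,\ldots,\xi_7$ (obtained from the condition that the evaluation is scalar) has only the trivial solution. The paper does this by substituting a generic diagonal matrix for $x$ and a fixed permutation-type matrix for $y$, then reading off the polynomial coefficients of a single off-diagonal entry. Your overall strategy is sound, but the linear system you must kill is of size seven, not two.
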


\begin{proof}
The only case of $\lambda=(\lambda_1,\lambda_2)\vdash 10$ with $\lambda_2\geq 5$ is $\lambda=(5,5)$.
By the computations in Subsection \ref{preliminaries for two variables}
the product of $k\geq 4$ commutators contains a $GL_2(K)$-submodule $W(5,5)$ if:

(i) The product $\text{\rm Com}_{m_1}\cdots\text{\rm Com}_{m_4}$ consists of two copies of $\text{\rm Com}_3$ and two copies of $\text{\rm Com}_2$.
The possible cases for $(m_1,m_2,m_3,m_4)$ are 6:
\[
(3,3,2,2), (3,2,3,2), (3,2,2,3), (2,3,3,2), (2,3,2,3), (2,2,3,3).
\]

(ii) The product consists of five commutators of length 2.

In the case (i) the highest weight vectors $w^{(1)},\ldots,w^{(6)}$ are similar to the first polynomial $w_1(x,y)$ in Lemma \ref{HWV of degree 10}:
\[
w^{(i)}(x,y)=[y,x]^p([y,x,x][y,x]^q[y,x,y]-[y,x,y][y,x]^q[y,x,x])[y,x]^r,
\]
$(p,q,r)=(2,0,0),(1,1,0),(1,0,1),(0,2,0),(0,1,1),(0,0,2)$.
In the case (ii) we have only the possibility $w^{(7)}(x,y)=w_2(x,y)=[y,x]^5$.
The possible central polynomials are the highest weight vectors
\[
w_{(5,5)}(x,y)=\sum_{i=1}^7\xi_iw^{(i)}(x,y)
\]
with unknown coefficients $\xi_i\in K$, $i=1,\ldots,7$. We can replace $x$ and $y$ with generic $4\times 4$ matrices
and apply the method described in Subsection \ref{the method of GL-representations}. There is a general principle
that in the generic matrix algebra we may assume that the first generic matrix is diagonal. Hence we may replace $x$ by the diagonal generic matrix
\[
u=\left(\begin{matrix}u_1&0&0&0\\
0&u_2&0&0\\
0&0&u_3&0\\
0&0&0&u_4\\
\end{matrix}\right).
\]
Instead of replacing $y$ by another generic matrix, we have done the following which has simplified the computations.
We have replaced $y$ by the matrix
\[
v=\left(\begin{matrix}0&1&0&0\\
0&0&1&0\\
0&0&0&1\\
1&0&0&0\\
\end{matrix}\right).
\]
If $w_{(5,5)}(x,y)$ is a central polynomial, then in
\[
w_{(5,5)}(u,v)=\sum_{a,b=1}^4z_{ab}E_{ab}
\]
all coefficients $z_{ab}$ are equal to 0 for $a\not=b$. We have computed
\[
z_{12}=\sum_if_i(\xi_1,\ldots,\xi_7)u_1^{i_1}u_2^{i_2}u_3^{i_3}u_4^{i_4}, i_1+i_2+i_3+i_4=5.
\]
The equality $z_{12}=0$ implies that all $f_i(\xi_1,\ldots,\xi_7)$ are also equal to 0.
In this way we obtain a homogeneous linear system
\[
f_i(\xi_1,\ldots,\xi_7)=0,i_1+i_2+i_3+i_4=5,
\]
with unknowns $\xi_1,\ldots,\xi_7$. It has turned out that the system has only the trivial solution which shows that
$M_4(K)$ does not have polynomial identities and central polynomials in $W(5,5)$.
\end{proof}

\subsection{Computations for degree 11}
The proof of the following lemma is similar to the proof of Lemma \ref{HWV of degree 10}.

\begin{lemma}\label{HWV of degree 11}
The following polynomials are highest weight vectors for $\lambda=(6,5)$:
\[
w_1(x,y)=([y,x,x][y,x,y]-[y,x,y][y,x,x])[y,x]^2\in \text{\rm Com}_4\text{\rm Com}_3\text{\rm Com}_2^2,
\]
\[
w_2(x,y)=([y,x,x][y,x,y]-[y,x,y][y,x,x])[y,x,x][y,x]\in \text{\rm Com}_3^3\text{\rm Com}_2
\]
(the skew symmetry between $x$ and $y$ in the end of the commutators of length $3$ is between the first and the second commutators of length $3$),
\[
w_3(x,y)=[y,x,x]([y,x,x][y,x,y]-[y,x,y][y,x,x])[y,x]\in\text{\rm Com}_3^3\text{\rm Com}_2
\]
(with skew symmetry between the second and the third commutators of length $3$),
\[
w_4(x,y)=[y,x,x][y,x]^4\in\text{\rm Com}_3\text{\rm Com}_2^4.
\]
\end{lemma}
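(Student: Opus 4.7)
The plan is to follow the template already used for Lemma \ref{HWV of degree 10}. For each of the four polynomials $w_i$, I will verify two things: that $w_i$ is bihomogeneous of multidegree $(6,5)$, and that it satisfies the invariance
\[
w_i(x, y+x) = w_i(x,y).
\]
Together, these properties characterize the highest weight vectors for $GL_2(K)$ acting on the $(6,5)$-component of $K\langle x,y\rangle$, because $w_\lambda$ being a highest weight vector is precisely equivalent to invariance under the upper triangular unipotent substitution $x \mapsto x$, $y \mapsto y+x$ on a bihomogeneous polynomial of weight $\lambda$.

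Bihomogeneity is immediate from counting: for each $w_i$ one sums the $x$-degrees and $y$-degrees over the listed commutators, checking that both sums give the asserted values and that the total length matches the stated product $\text{Com}_{m_1}\cdots\text{Com}_{m_k}$. The factor $[y,x]$ contributes $(1,1)$, $[y,x,x]$ contributes $(2,1)$, $[y,x,y]$ contributes $(1,2)$, and longer commutators are counted analogously.

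For the invariance, the key observation is that $[y+x, x] = [y,x]$, since $[x,x] = 0$. It follows that any left-normed commutator of the form $[y,x,x,\ldots,x]$ (with only $x$'s in the tail) is fixed by the substitution $y \mapsto y+x$. Thus $w_4$, which is built exclusively from such commutators, is manifestly invariant. The delicate cases are $w_1$, $w_2$, $w_3$, where the factor $[y,x,y]$ appears and is not invariant: the substitution produces
\[
[y+x, x, y+x] = [y,x,y] + [y,x,x].
\]
The extra term $[y,x,x]$ has to cancel. The antisymmetric combinations $AB - BA$ built into the definitions of $w_1$, $w_2$, $w_3$ are arranged precisely for this purpose: after substitution, the $[y,x,y]$ factor in each of the two summands $AB$ and $BA$ acquires an extra $[y,x,x]$ piece, and since the surrounding factors in $AB$ and $BA$ are obtained from each other by a transposition, these extra pieces cancel by the sign.

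I would carry out this cancellation explicitly for one representative, say $w_2$, where after substituting $y \mapsto y+x$ the $[y,x,y]$ inside each of $[y,x,x][y,x,y]$ and $[y,x,y][y,x,x]$ generates an extra $[y,x,x]$ term; the resulting excess summand $[y,x,x][y,x,x][y,x,x][y,x] - [y,x,x][y,x,x][y,x,x][y,x]$ is identically zero. The arguments for $w_1$ and $w_3$ are identical up to the position of the antisymmetry. The only obstacle is routine combinatorial bookkeeping; no conceptual difficulty arises, and the verification is entirely mechanical once the pattern from $w_2$ is established.
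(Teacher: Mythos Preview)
Your approach is exactly the paper's: the proof there reads in full ``similar to the proof of Lemma~\ref{HWV of degree 10}'', and that lemma in turn is proved by checking bihomogeneity of the correct multidegree together with the invariance $w_i(x,y+x)=w_i(x,y)$. Your explanation of why the antisymmetric combinations kill the extra term coming from $[y,x,y]\mapsto[y,x,y]+[y,x,x]$ is precisely the content of ``direct computation''.

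One remark, however: had you actually carried out the ``immediate'' degree count for $w_1$ as printed, you would have obtained multidegree $(5,5)$ and total degree $10$. The displayed $w_1$ is word-for-word the $w_1$ of Lemma~\ref{HWV of degree 10} and cannot lie in $\text{\rm Com}_4\text{\rm Com}_3\text{\rm Com}_2^2$; this is evidently a typographical slip in the statement, and the intended polynomial is $([y,x,x,x][y,x,y]-[y,x,x,y][y,x,x])[y,x]^2$. Your cancellation argument applies to this corrected form verbatim (both extra terms produced by $y\mapsto y+x$ equal $[y,x,x,x][y,x,x]$ and subtract to zero). So your method is sound, but the episode illustrates why ``immediate'' checks are worth writing out.
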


\begin{proposition}\label{no central polynomials of degree 11}
The algebra $M_4(K)$ does not have central polynomials and polynomial identities of degree $11$ in $K\langle x,y\rangle$.
\end{proposition}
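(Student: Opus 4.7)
The plan is to follow the strategy used in Proposition \ref{no central polynomials of degree 10}, replacing the partition $(5,5)$ by $(6,5)$. By Corollary \ref{central polynomials of degree <10} and Proposition \ref{no central polynomials of degree 10}, any polynomial identity or central polynomial of degree $11$ in two variables is of minimal degree, so by Lemma \ref{reduction to proper} and Remark \ref{reduction to proper PI} it is a proper polynomial lying in products of at least four commutators. Combined with the three reductions in Subsection \ref{preliminaries for two variables}, this leaves only the partition $\lambda = (6,5) \vdash 11$ to consider. As in the degree-$10$ case, the vanishing of a single off-diagonal entry $z_{12}$ is a necessary condition both for a polynomial identity and for a central polynomial, so one linear-algebra check covers both halves of the statement simultaneously.

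The first step is to enumerate all highest weight vectors of weight $(6,5)$ arising in products of four or five commutators, using the decompositions listed in Subsection \ref{preliminaries for two variables}. Products of four commutators with length pattern $(4,3,2,2)$ contribute one copy of $W(6,5)$ per ordering, yielding $12$ vectors. Products of four commutators with pattern $(3,3,3,2)$ contribute two independent copies per ordering, corresponding (as in Lemma \ref{HWV of degree 11}) to the two inequivalent skew-symmetrizations of a chosen pair of length-$3$ commutators; with $4$ orderings this produces $8$ further vectors. Products of five commutators have only the pattern $(3,2,2,2,2)$, contributing one copy per ordering and $5$ more vectors. (Six or more commutators cannot occur since $6 \cdot 2 > 11$.) In total one obtains $12 + 8 + 5 = 25$ independent highest weight vectors $w^{(1)}, \ldots, w^{(25)}$, each built from the templates in Lemma \ref{HWV of degree 11} by choosing the positions of the factors and, where applicable, the pair to be skew-symmetrized.

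Next I would form the generic candidate
\[
w_{(6,5)}(x,y) = \sum_{i=1}^{25} \xi_i \, w^{(i)}(x,y), \qquad \xi_i \in K,
\]
substitute $x$ by the diagonal generic matrix $u$ and $y$ by the cyclic matrix $v$ from the proof of Proposition \ref{no central polynomials of degree 10}, and extract the $(1,2)$-entry
\[
z_{12} = \sum_{i_1+i_2+i_3+i_4=6} f_{i_1 i_2 i_3 i_4}(\xi_1,\ldots,\xi_{25}) \, u_1^{i_1} u_2^{i_2} u_3^{i_3} u_4^{i_4}.
\]
Equating each monomial coefficient $f_{i_1 i_2 i_3 i_4}$ to zero gives a homogeneous linear system in the $25$ unknowns $\xi_i$. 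A computer-algebra rank computation should exhibit that this system admits only the trivial solution, which is the desired conclusion.

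The main obstacle is computational and organizational rather than conceptual: correctly listing all $25$ highest weight vectors — especially keeping track of which pair of length-$3$ commutators is being skew-symmetrized at each of the $4$ orderings of the $(3,3,3,2)$ pattern — is error-prone, and the resulting linear system is noticeably larger than the $7$-unknown system treated in degree $10$. Once the basis is fixed correctly, however, the rank verification is entirely mechanical, and the triviality of the kernel immediately yields both halves of the proposition.
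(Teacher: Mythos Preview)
Your enumeration of the $25$ highest weight vectors for $\lambda=(6,5)$ is correct and matches the paper exactly, and the overall strategy is the same as the paper's. However, the specific substitution you propose is not enough: the paper reports that evaluating at $x=u$ and $y=v$ and setting $z_{12}=0$ only reduces the $25$ unknowns to $10$ --- the resulting homogeneous system has a $10$-dimensional solution space, not the trivial one. So your claim that ``a computer-algebra rank computation should exhibit that this system admits only the trivial solution'' would fail at exactly this point.

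To finish, the paper introduces a second matrix
\[
v_1=\begin{pmatrix}0&1&1&0\\0&0&1&0\\0&0&0&1\\1&0&0&0\end{pmatrix}
\]
and uses, in addition to $z_{12}$ from $w_{(6,5)}(u,v)$, the entries $z'_{13}$ and $z'_{14}$ from $w_{(6,5)}(u,v_1)$. The condition $z'_{13}=0$ eliminates $8$ of the remaining unknowns, and $z'_{14}=0$ kills the last two. Your write-up would be fine once you replace the single evaluation by this two-matrix, three-entry scheme (or some other choice of substitutions that actually attains full rank).
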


\begin{proof}
We follow the main steps of the proof of Proposition \ref{no central polynomials of degree 10}.
It is sufficient to consider the case $\lambda=(6,5)$.
There is one submodule $W(6,5)$ in the decomposition of $\text{\rm Com}_4\text{\rm Com}_3\text{\rm Com}_2^2$
into a sum of irreducible $GL_2(K)$-modules and there are 12 possibilities for the ordering of
$\text{\rm Com}_4$, $\text{\rm Com}_3$ and $2\text{\rm Com}_2$ as a product $\text{\rm Com}_{m_1}\cdots \text{\rm Com}_{m_4}$.
Hence there are 12 highest weight vectors similar to $w_1(x,y)$.
Depending of the position of $\text{\rm Com}_2$ in the product of $3\text{\rm Com}_3$ and $\text{\rm Com}_2$ there are 4 possibilities
for the highest weight vectors similar to $w_2(x,y)$ and $w_3(x,y)$.
Finally, there are 5 possibilities for polynomials similar to $w_4(x,y)$.
The central polynomials and polynomial identities which we search for are the highest weight vectors
\[
w_{(6,5)}(x,y)=\sum_{i=1}^{25}\xi_iw^{(i)}(x,y)
\]
with unknown coefficients $\xi_i\in K$, $i=1,\ldots,25$. As in Proposition \ref{no central polynomials of degree 10}
we replace $x$ by the diagonal generic matrix $u$ and $y$ by the matrices $v$ and
\[
v_1=\left(\begin{matrix}0&1&1&0\\
0&0&1&0\\
0&0&0&1\\
1&0&0&0\\
\end{matrix}\right).
\]
Expressing $w_{(6,5)}(u,v)$ and $w_{(6,5)}(u,v_1)$ in the form
\[
w_{(6,5)}(u,v)=w_{(5,5)}(u,v)=\sum_{a,b=1}^4z_{ab}E_{ab},
\]
\[
w_{(6,5)}(u,v_1)=w_{(5,5)}(u,v)=\sum_{a,b=1}^4z'_{ab}E_{ab},
\]
we consider the homogeneous linear system obtained from the conditions $z_{12}=z'_{13}=z'_{14}=0$.
Using the first condition $z_{12}=0$ only, we reduce the system to a system with 10 unknowns only. The second condition $z'_{13}=0$
eliminates 8 more unknowns. Finally, the condition $z'_{14}=0$ shows that the obtained system with two unknowns has only the trivial solution.
\end{proof}

\subsection{Computations for degree 12}
The highest weight vectors we need are described in the following lemma.
The proof is similar to the proof of Lemma \ref{HWV of degree 10}.

\begin{lemma}\label{HWV of degree 12}
{\rm (i)} The following polynomials are highest weight vectors in $\text{\rm Com}_{m_1}\cdots \text{\rm Com}_{m_k}$, $m=(m_1,\ldots,m_k)$, for $\lambda=(7,5)$:
\[
(5,3,2^2):w_1(x,y)=([y,x,x,x,x][y,x,y]-[y,x,x,x,y][y,x,x])[y,x]^2, 
\]
\[
(4^2,2^2):w_2(x,y)=([y,x,x,x][y,x,x,y]-[y,x,x,y][y,x,x,x])[y,x]^, 
\]
\[
(4,3^2,2):w_3(x,y)=([y,x,x,x][y,x,y]-[y,x,x,y][y,x,x])[y,x,x][y,x], 
\]
\[
(4,3^2,2):w_4(x,y)=[y,x,x,x]([y,x,x][y,x,y]-[y,x,y][y,x,x])[y,x], 
\]
\[
(3^4):w_5(x,y)=([y,x,x][y,x,y]-[y,x,y][y,x,x])[y,x,x][y,x,x],
\]
\[
(4,2^4):w_6(x,y)=[y,x,x,x][y,x]^4,
\]
\[
(3^2,2^3):w_7(x,y)=[y,x,x]^2[y,x]^3. 
\]

{\rm (ii)} The following are highest weight vectors for $\lambda=(6,6)$:
\[
(4^2,2^2):w_1(x,y)=([y,x,x,x][y,x,y,y]-[y,x,y,x][y,x,x,y]-
\]
\[
-[y,x,x,y][y,x,y,x]+[y,x,y,y][y,x,x,x])[y,x]^2,
\]
\[
(4,3^2,2):w_2(x,y)=([y,x,x,x][y,x,y][y,x,y]-[y,x,y,x][y,x,x][y,x,y]-
\]
\[
-[y,x,x,y][y,x,y][y,x,x]+[y,x,y,y][y,x,x][y,x,x])[y,x],
\]
\[
(3^4):w_3(x,y)=([y,x,x][y,x,y]-[y,x,y][y,x,x])^2,
\]
\[
w_4(x,y)=[y,x,x][y,x,x][y,x,y][y,x,y]-[y,x,y][y,x,x][y,x,x][y,x,y]-
\]
\[
-[y,x,x][y,x,y][y,x,y][y,x,x]+[y,x,y][y,x,y][y,x,x][y,x,x],
\]
\[
(3^2,2^3):w_5(x,y)=([y,x,x][y,x,y]-[y,x,y][y,x,x])[y,x]^3,
\]
\[
(2^6):w_6(x,y)=[y,x]^6.
\]
\end{lemma}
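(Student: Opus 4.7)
The plan is to follow exactly the strategy used for Lemma~\ref{HWV of degree 10}: for each of the thirteen listed polynomials, verify that (a) it is bihomogeneous of the claimed bidegree ($(7,5)$ in part (i), $(6,6)$ in part (ii)), and (b) it satisfies the defining highest weight relation $w(x,y+x)=w(x,y)$ for a $GL_2(K)$-submodule $W(\lambda_1,\lambda_2)$.

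Bihomogeneity is an immediate count of the $x$- and $y$-occurrences in each monomial summand, given that a commutator of length $m$ in which $y$ appears $j$ times has bidegree $(m-j,j)$. The nontrivial step is the invariance under $y\mapsto y+x$. The key identity is $[y+x,x,x,\ldots,x]=[y,x,x,\ldots,x]$, valid for any number of trailing $x$'s because $[x,x]=0$ and left-normed commutators are linear in their last entry. Consequently, commutators whose only $y$ sits in the first slot are unchanged by the substitution $y\to y+x$. Commutators with additional $y$'s in later slots acquire extra summands in which each such $y$ is replaced by $x$; for instance $[y+x,x,y+x]=[y,x,y]+[y,x,x]$ and $[y+x,x,x,x,y+x]=[y,x,x,x,y]+[y,x,x,x,x]$. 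The signed combinations appearing in the definitions of the $w_i$ are engineered precisely so that, upon expansion of $y\mapsto y+x$ in every slot, all such extra summands cancel in pairs and one recovers $w_i(x,y)$.

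The thirteen polynomials fall into three levels of difficulty. The easiest ($w_6,w_7$ in (i) and $w_5,w_6$ in (ii)) have their $y$'s exclusively in first slots, so the invariance is automatic. The two-term antisymmetrizations ($w_1$ through $w_5$ in (i) and $w_3,w_4$ in (ii)) each produce a single pair of extra monomials that cancels in one line. The most laborious cases are $w_1$ and $w_2$ of part (ii): these are four-term antisymmetrizations over the placement of two $y$'s inside a pair of commutators, so $y\mapsto y+x$ expansion in every $y$-slot of every factor generates a considerably larger collection of cross terms. Here the same antisymmetric structure, applied separately to each pair of $y$-slots, forces all those cross terms to pair off with opposite signs.

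In every case the verification is mechanical; the main obstacle is nothing more than the volume of bookkeeping for $w_1$ and $w_2$ of part (ii), and this can be confirmed either by direct expansion or by a brief symbolic computer-algebra check.
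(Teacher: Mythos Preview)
Your approach is correct and essentially identical to the paper's, which simply refers back to Lemma~\ref{HWV of degree 10}: check bihomogeneity of each $w_i$ and verify $w_i(x,y+x)=w_i(x,y)$ by direct computation. One small slip in your classification: $w_5$ in part~(ii) does \emph{not} belong to the ``automatic'' group, since the factor $[y,x,y]$ carries a $y$ in a later slot; it belongs with the two-term antisymmetrizations and is handled exactly as you describe for that case.
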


\begin{proposition}\label{no central polynomials of degree 12}
The algebra $M_4(K)$ does not have central polynomials and polynomial identities of degree $12$ in $K\langle x,y\rangle$.
\end{proposition}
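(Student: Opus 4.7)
The plan is to follow the template of Propositions~\ref{no central polynomials of degree 10} and~\ref{no central polynomials of degree 11}, scaled up to degree~$12$. By items (i)--(iii) of Subsection~\ref{preliminaries for two variables}, it suffices to look for highest weight vectors $w_\lambda(x,y)\in W(\lambda)$ with $\lambda\vdash 12$, $\lambda_2\geq 5$ and expressible as a linear combination of products of at least four commutators, so only $\lambda=(7,5)$ and $\lambda=(6,6)$ are relevant. For each such $\lambda$, I would enumerate all highest weight vectors: for every length-multiset $(m_1,\ldots,m_k)$ appearing in the tables (v), (vi), (vii) of Subsection~\ref{preliminaries for two variables}, each ordering of that multiset contributes as many independent highest weight vectors as the multiplicity of $W(\lambda)$ in the corresponding tensor product, built from the prototypes $w_1,\ldots,w_7$ (for $(7,5)$) and $w_1,\ldots,w_6$ (for $(6,6)$) of Lemma~\ref{HWV of degree 12}. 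A routine bookkeeping yields on the order of $60$ basis vectors $w^{(1)}_{(7,5)},\ldots,w^{(N_{(7,5)})}_{(7,5)}$ and on the order of $30$ basis vectors $w^{(1)}_{(6,6)},\ldots,w^{(N_{(6,6)})}_{(6,6)}$.

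The candidate highest weight vector is then
\[
w_\lambda(x,y)=\sum_{i=1}^{N_\lambda}\xi_i w^{(i)}_\lambda(x,y),\qquad \xi_i\in K.
\]
Following the computational strategy of Propositions~\ref{no central polynomials of degree 10} and~\ref{no central polynomials of degree 11}, I substitute $x$ by the diagonal generic matrix $u=\mathrm{diag}(u_1,u_2,u_3,u_4)$ and $y$ in turn by the matrices $v$ and $v_1$ already used in Proposition~\ref{no central polynomials of degree 11}, together with further matrices $v_2,v_3,\ldots$ obtained by adding extra $0/1$ entries above the diagonal of $v$. For each specialisation $y=v_\ast$, writing $w_\lambda(u,v_\ast)=\sum_{a,b=1}^4 z^{(\ast)}_{ab}E_{ab}$, the requirement $z^{(\ast)}_{ab}=0$ for $a\neq b$ is a polynomial condition in $u_1,\ldots,u_4$ homogeneous of degree~$\lambda_1$; extracting the coefficient of each monomial $u_1^{i_1}u_2^{i_2}u_3^{i_3}u_4^{i_4}$ produces a homogeneous linear equation in the $\xi_i$. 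Running Gaussian elimination on the accumulated system with a standard computer algebra package, I aim to show that only the trivial solution survives, which by Lemma~\ref{reduction to proper} and Remark~\ref{reduction to proper PI} settles both the central polynomial claim and the polynomial identity claim.

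The main obstacle is purely one of computational scale rather than conceptual novelty. The number $N_{(7,5)}$ is roughly triple the $25$ unknowns handled in Proposition~\ref{no central polynomials of degree 11}, and each off-diagonal coefficient $z^{(\ast)}_{ab}$ expands into $\binom{\lambda_1+3}{3}$ monomial coefficients in $u_1,\ldots,u_4$, so the constraint matrix grows quickly. I expect the strategy of peeling off unknowns via successive substitutions $v,v_1,v_2,\ldots$ to terminate after only a few rounds, since each new matrix $v_\ast$ breaks different symmetries of the previous ones and strictly increases the rank of the constraint matrix; the practical issue is choosing the $v_\ast$ and the exploited positions $(a,b)$ efficiently enough to keep the linear algebra within the memory of a laptop. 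The genuine risk is an error in the enumeration of the $w^{(i)}_\lambda$, which can be cross-checked by verifying that the total number of basis vectors constructed matches the multiplicity of $W(\lambda)$ read off directly from the decomposition tables in Subsection~\ref{preliminaries for two variables}.
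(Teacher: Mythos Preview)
Your proposal is essentially the paper's own proof: reduce to $\lambda=(7,5)$ and $\lambda=(6,6)$, enumerate the highest weight vectors coming from the commutator-product decompositions (the paper gets exactly $60$ and $31$, matching your estimates), substitute $x=u$ diagonal generic, and check that the linear system coming from an off-diagonal entry has only the trivial solution. The one tactical difference is that instead of iterating through several $0/1$ matrices $v,v_1,v_2,\ldots$ as you propose, the paper uses a \emph{single} matrix $v_2$ containing one additional generic entry $v_{13}$; expanding $z_{12}$ in both the $u_i$ and $v_{13}$ then yields enough equations in one shot, which keeps the bookkeeping to a single substitution rather than several rounds of elimination.
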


\begin{proof}
Again, we follow the main steps of the proof of Proposition \ref{no central polynomials of degree 10}.
It is sufficient to consider the case $\lambda=(7,5)$ and $\lambda=(6,6)$.

(i) The case $\lambda=(7,5)$. For
\[
(m_1,\ldots,m_k)=(4^2,2^2),(3^2,2^3),(2^6)
\]
there is only one $GL_2(K)$-submodule $W(7,5)$ in the products of commutators
\[
\text{\rm Com}_{m_1}\cdots\text{\rm Com}_{m_k},m_1+\cdots+m_k=12,k\geq 4.
\]
The number of direct summands $W(7,5)$ is equal to 2 in the cases
\[
(m_1,\ldots,m_k)=(4,3^2,2),(3^4).
\]
The corresponding highest weight vectors are given in Lemma \ref{HWV of degree 12} (i).
Depending on the positions of the commutators of length $m_1,\ldots,m_k$ the number of linearly independent highest weight vectors is:
\begin{itemize}
\item $(5,3,2^2)$, $w_1(x,y)$: 12,
\item $(4^2,2^2)$, $w_2(x,y)$: 6,
\item $(4,3^2,2)$, $w_3(x,y),w_4(x,y)$: 24,
\item $(3^4)$, $w_5(x,y),w_4(x,y)$: 3,
\item $(4,2^5)$, $w_6(x,y)$: 5,
\item $(3^2,2^3)$, $w_7(x,y)$: 10.
\end{itemize}
Totally we have 60 linearly independent highest weight vectors.
We consider
\[
w_{(7,5)}(x,y)=\sum_{i=1}^{60}\xi_iw^{(i)}(x,y)
\]
with unknown coefficients $\xi_i\in K$, $i=1,\ldots,60$. As in Proposition \ref{no central polynomials of degree 10}
we replace $x$ by the diagonal generic matrix $u$. Now we replace $y$ by the matrix
\[
v_2=\left(\begin{matrix}1&1&v_{13}&1\\
0&1&1&1\\
1&1&1&1\\
1&1&0&0\\
\end{matrix}\right).
\]
We consider the entry $z_{12}$ of
\[
w_{(7,5)}(x,y)=\sum_{a,b=1}^4z_{ab}E_{ab}.
\]
It is in the form
\[
z_{12}=\sum_{i,j}f_{ij}(\xi_1,\ldots,\xi_{60})u_1^{i_1}u_2^{i_2}u_3^{i_3}u_4^{i_4}v_{13}^j.
\]
We have computed that the homogeneous linear system consisting of all
\[
f_{ij}(\xi_1,\ldots,\xi_{60})=0
\]
has only the zero solution and this shows that $M_4(K)$ does not have polynomial identities and central polynomials for $\lambda=(7,5)$.

(ii) The case $\lambda=(6,6)$. As in the case of the proof of (i), the corresponding highest weight vectors are given in Lemma \ref{HWV of degree 12} (ii).
Depending on the positions of the commutators of length $m_1,\ldots,m_k$ in the product of $\text{\rm Com}_{m_1},\ldots,\text{\rm Com}_{m_k}$,
the number of linearly independent highest weight vectors is:
\begin{itemize}
\item $(4^2,2^2)$, $w_1(x,y)$: 6,
\item $(4,3^2,2)$, $w_2(x,y)$: 12,
\item $(3^4)$, $w_3(x,y),w_4(x,y)$: 2,
\item $(3^2,2^3)$, $w_5(x,y)$: 10,
\item $(2^5)$: $w_6(x,y)$: 1.
\end{itemize}
Totally we have 31 linearly independent highest weight vectors. As in (i), we consider the linear combination $w_{(6,6)}(x,y)$ of these 31 polynomials,
evaluate it for $x=u$, $y=v_2$ and solve the homogeneous linear system obtained from the entry $z_{12}$. The system has only the trivial solution which shows
that $M_4(K)$ does not have polynomial identities and central polynomials for $\lambda=(6,6)$.
The same result can be obtained if we use the entry $z_{12}$ of the evaluation $w_{(6,6)}(u,v_3)$ instead of the evaluation $w_{(6,6)}(u,v_2)$, where
\[
v_3=\left(\begin{matrix}0&1&v_{13}&0\\
0&1&1&0\\
0&1&0&1\\
1&0&0&0\\
\end{matrix}\right).
\]
\end{proof}

As a conclusion, the method based on the combination of representation theory of $GL_d(K)$ and the theory of PI-algebras
is much better from computational point of view than the other considered methods. For example, in the case of polynomials in two variables of degree 12
we have to solve only two systems with 60 and 31 unknowns only. In the corresponding cases based only on representation theory of $GL_d(K)$
we consider, respectively, systems with 297 and 132 unknowns.

\end{document}